\renewcommand{\deg}{{\rm deg}}
\newcommand{\rk}{{\rm rk}}
\newcommand{\dblq}{{/\!/}}
\theoremstyle{plain}
\newtheorem{thm}{Theorem}[section]
\newtheorem{cor}[thm]{Corollary}
\newtheorem{prop}[thm]{Proposition}
\newtheorem{lem}[thm]{Lemma}
\newtheorem{con}[thm]{Conjecture}
\theoremstyle{definition}
\newtheorem{rmk}[thm]{Remark}
\def\PP{{\textbf P}}
\def\OO{\mathcal{O}}
\def\F{\mathcal{F}}
\def\E{\mathcal{E}}
\def\cM{\mathcal{M}}
\def\cU{\mathcal{U}}
\def\mm{\overline{\mathcal{M}}}
\def\ss{\mathcal{S}}
\author[M. Aprodu]{Marian Aprodu}
\address{Institute of Mathematics "Simion Stoilow" of the Romanian
Academy, Calea Grivi\c tei \indent 21, Sector 1, RO-010702 Bucharest, Romania}
 \email{{\tt
aprodu@imar.ro}}
\author[G. Farkas]{Gavril Farkas}
\address{Humboldt-Universit\"at zu Berlin, Institut f\"ur Mathematik,  Unter den Linden 6
\hfill \newline\texttt{}
 \indent 10099 Berlin, Germany} \email{{\tt farkas@math.hu-berlin.de}}
 \author[A. Ortega]{Angela Ortega}
\address{Humboldt-Universit\"at zu Berlin, Institut f\"ur Mathematik,  Unter den Linden 6
\hfill \newline\texttt{}
 \indent 10099 Berlin, Germany} \email{{\tt ortega@math.hu-berlin.de}}
\title{Restricted Lazarsfeld-Mukai bundles and canonical curves}
\begin{document}

\dedicatory{Dedicated to Professor Shigeru Mukai on his sixtieth birthday, with admiration}
\maketitle


\vskip 5pt

For a $K3$ surface $S$, a smooth curve $C\subset S$ and a globally generated linear series $A\in W^r_d(C)$ with $h^0(C, A)=r+1$, the \emph{Lazarsfeld-Mukai} vector bundle $E_{C, A}$ is defined via the following elementary modification on $S$
\begin{equation}
\label{eqn: F}
0\longrightarrow E_{C, A}^{\vee} \longrightarrow H^0(C, A)\otimes \OO_S\longrightarrow A\longrightarrow 0.
\end{equation}

The bundles $E_{C, A}$ have been introduced more or less simultaneously in the 80's by Lazarsfeld \cite{L1} and Mukai \cite{M89} and have acquired quite some prominence in algebraic geometry. On one hand, they have been used to show that curves on general $K3$ surfaces verify the Brill-Noether theorem \cite{L1}, and this is still the only class of smooth curves known to be general in the sense of Brill-Noether theory in every genus. When $\rho(g, r,d)=0$, the vector bundle $E_{C,A}$ is rigid and plays a key role in the classification of Fano varieties of coindex $3$. For $g=7, 8, 9$, the corresponding Lazarsfeld-Mukai bundle has been used to coordinatize the moduli space of curves of genus $g$ , thus giving rise to a new and more concrete model of $\cM_g$, see  \cite{M2}, \cite{M3}, \cite{M4}. Furthermore,  Lazarsfeld-Mukai bundles of rank two have led  to a characterization of the locus in $\cM_g$ of curves lying on $K3$ surfaces in terms of existence of linear series with unexpected syzygies \cite{F}, \cite{V}. For a recent survey on this circle of ideas, see \cite{A}.

\vskip 3pt

Recently, Lazarsfeld-Mukai bundles have proven to be effective in shedding some light on an interesting conjecture of Mercat  in Brill-Noether theory, see  \cite{FO1}, \cite{FO12}, \cite{LMN}. Recall that the Clifford index of a semistable vector bundle $E\in \cU_C(n, d)$ on a smooth curve $C$ of genus $g$ is defined in \cite{LN1} as
$$\gamma(E):=\mu(E)-\frac{2}{n} h^0(C, E)+2\geq 0.$$
Then the  \emph{higher Clifford indices} of the curve $C$ are defined as the quantities
$$\mathrm{Cliff}_n(C):=\mbox{min}\Bigl\{\gamma(E): E\in \cU_C(n, d), \ \ d\leq n(g-1), \ \ h^0(C, E)\geq 2n\Bigr\}.$$
For any line bundle $L$ on $C$ such that $h^i(C, L)\geq 2$ for $i=0, 1$, that is, contributing to the classical Clifford index $\mbox{Cliff}(C)$, by computing the invariants of the strictly semistable vector bundle $E:=L^{\otimes n}$, one finds that
$\mbox{Cliff}_n(C)\leq \mbox{Cliff}(C).$
Mercat \cite{Me1} predicted that for any smooth curve $C$ of genus $g$, the following equality
$$
(M_n):  \  \   \mathrm{Cliff}_n(C)=\mathrm{Cliff}(C).
$$
should hold. Counterexamples to $(M_2)$ have been found on curves lying on $K3$ surfaces that are special in Noether-Lefschetz sense, see \cite{FO1}, \cite{FO12} and \cite{LN2}. However, $(M_2)$ is expected to hold for a general curve of genus $g$, and in fact even for a curve $C$ lying on a $K3$ surface $S$ such that $\mbox{Pic}(S)=\mathbb Z \cdot C$. For instance, it is known that statement $(M_2)$ holds on $\cM_{11}$ outside a certain Koszul divisor (which also admits a Noether-Lefschetz realization), see \cite{FO12} Theorem 1.3. It has also been shown that $(M_2)$ holds generically on $\cM_g$ for $g\leq 16$, see \cite{FO1}.

\vskip 3pt

It has been proved in \cite{LMN} that rank three restricted Lazarsfeld-Mukai bundles invalidate statement $(M_3)$ in genus $9$ and $11$ respectively, that is, Mercat's conjecture in rank three fails generically on  $\cM_9$ and $\cM_{11}$ respectively. This was then extended in \cite{FO12} Theorem 1.4, to show that on a $K3$ surface $S$ with $\mbox{Pic}(S)=\mathbb Z\cdot C$, where $C^2=2g-2$, if $A\in W^2_d(C)$ is a linear system where $d:=\lfloor \frac{2g+8}{3}\rfloor$, the restriction to $C$ of the Lazarsfeld-Mukai bundle $E_{C,A}$ is stable and has Clifford index strictly less than $\lfloor \frac{g-1}{2}\rfloor $, in particular, statement $(M_3)$ fails for the curve $C$.  For further background on this problem, we refer to the papers \cite{Me1},  \cite{LN1} and \cite{GMN}.

\vskip 3pt

The restricted Lazarsfeld-Mukai bundle $E|_C:=E_{C, A}\otimes \OO_C$ sits in the following exact sequence on the curve $C\subset S$
\begin{equation}
\label{eqn: E|_C}
0\longrightarrow Q_A\longrightarrow E|_C\longrightarrow K_C\otimes A^{\vee}\longrightarrow 0,
\end{equation}
where $Q_A=M_A^{\vee}$ is the dual of the kernel bundle defined by the sequence
$$0\longrightarrow M_A\longrightarrow H^0(C,A)\otimes \OO_C\longrightarrow A\longrightarrow 0.$$ One then easily shows \cite{V}, \cite{FO12} that the sequence (\ref{eqn: E|_C}) is exact on global sections, that is,
$$h^0(C, E|_C)=h^0(C, K_C\otimes A^{\vee})+h^0(C, Q_A)=g-d+2r+1.$$ By choosing the degree $d$ minimal such that $W^r_d(C)\neq \emptyset$, precisely $d=r+\bigl\lfloor \frac{r(g+1)}{r+1}\bigr\rfloor$, it becomes clear that, for sufficiently high $g$, one has $$\gamma(E|_C)<\mathrm{Cliff}(C),$$ that is,  $E|_C$, when semistable, provides a counterexample to Mercat's conjecture $(M_{r+1})$. We prove the following result, extending to rank $4$ a picture studied in smaller ranks in the papers \cite{M89}, \cite{V}, respectively \cite{FO12}.

\begin{thm}
\label{thm: rank four}
Let $S$ be a $K3$ surface  with $\mathrm{Pic}(S)=\mathbb Z\cdot L$, where $L^2=2g-2$ and
write $$g=4i-4+\rho \ \mbox{ and }   \  \ d=3i+\rho,$$ with $\rho\geq 0$ and $i\geq 6$. Then for a general curve
$C\in |L|$ and a globally generated linear series $A\in W^3_d(C)$ with $h^0(C, A)=4$, the restriction to $C$ of the Lazarsfeld-Mukai bundle $E_{C, A}$ is stable.
\end{thm}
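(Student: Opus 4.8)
The plan is to argue by contradiction. Write $E:=E_{C,A}$ and suppose $E|_C$ is not stable; let $F\subset E|_C$ be a saturated destabilizing subsheaf, which we may take to be semistable (the first step of its Harder-Narasimhan filtration), so $1\le s:=\rk(F)\le 3$ and $\mu(F)\ge \mu(E|_C)=(g-1)/2$. Two structural facts are used throughout. First, since $\Pic(S)=\ZZ\cdot L$ the bundle $E$ is $\mu_L$-stable with $c_1(E)=L$ and $\chi(S,E)=8+(g-1)-d=i+3$, by the classical argument of \cite{L1}; since $E\otimes L^\vee$ is then $\mu_L$-stable of negative slope, $h^0(S,E\otimes L^\vee)=0$ and the restriction map $H^0(S,E)\hookrightarrow H^0(C,E|_C)$ is injective, whence, comparing with $h^0(C,E|_C)=g-d+2r+1=i+3$ recorded above, it is an \emph{isomorphism}: every section of $E|_C$ extends to $S$. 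Second, as $C$ is general in $|L|$ it is Brill-Noether-Petri general; a general $A\in W^3_d(C)$ is then base-point-free with $h^1(C,A)=i-1\ge 5$ and $E$ is globally generated, and the kernel bundle $M_A$ --- hence $Q_A=M_A^\vee$ --- is semistable of slope $d/3$, so $\deg(F'')\le \rk(F'')\cdot d/3$ for every subsheaf $F''\subseteq Q_A$.

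The backbone of the argument is to dissect $F$ along the sequence (\ref{eqn: E|_C}). Put $F':=F\cap Q_A$ and $\bar F:=F/F'$, a subsheaf of the line bundle $K_C\otimes A^\vee$; then $\deg(F)=\deg(F')+\deg(\bar F)$ with $\deg(F')\le \rk(F')\cdot d/3$ and $\deg(\bar F)\le 2g-2-d$. If $F\subseteq Q_A$ then $\mu(F)\le d/3<(g-1)/2$ --- the last inequality being $i-\tfrac{5}{2}+\tfrac{\rho}{6}>0$ --- so $F$ cannot destabilize; hence $\bar F\ne 0$ and $\rk(F')=s-1$. On the other side, the subspace $W:=H^0(C,F)\subseteq H^0(S,E)$ generates over $S$ a subsheaf $\mathcal F\subset E$ of rank $\le s$; it is proper, so $\mu_L$-stability of $E$ gives $c_1(\mathcal F)\cdot L<\tfrac{s(g-1)}{2}$, and restricting to $C$ bounds the degree of the subsheaf of $F$ generated by $W$. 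Combining this with the Clifford inequality $h^0(F)\le \tfrac12\deg(F)+s$ for the semistable $F$, with the lower bound $h^0(F)\ge h^0(C,E|_C)-h^0(C,E|_C/F)=i+3-h^0(C,E|_C/F)$, and with the global generation of the quotient $E|_C/F$, one extracts an upper estimate for $\deg(F)$; inserting $g=4i-4+\rho$, $d=3i+\rho$ and pitting this against $\deg(F)\ge \tfrac{s(g-1)}{2}$ eliminates the intermediate case $s=2$ (where $\rk(F')=1$, so $\deg(F')$ and $\deg(\bar F)$ are both controlled by a single application of semistability of $Q_A$ and of $\deg(\bar F)\le 2g-2-d$), as well as the cases $s=1,3$ whenever the relevant sub- or quotient sheaf is generated by sufficiently many of the $i+3$ sections.

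The delicate, and I expect hardest, point is the remaining possibility for $s=1$ or (dually) $s=3$: a destabilizing sub-line-bundle $N\subset E|_C$, or a line-bundle quotient of degree $\le (g-1)/2$, which via the dual sequence $0\to K_C^\vee\otimes A\to E|_C^\vee\to M_A\to 0$ is an object of the same type. Here the crude bounds above leave a nonempty numerical window and one must invoke the Brill-Noether genericity of $C$ directly. By the first paragraph $N$ injects into $K_C\otimes A^\vee$, so $N=(K_C\otimes A^\vee)(-D)$ for an effective divisor $D$ of degree $\delta=2g-2-d-\deg(N)$, and the inclusion $N\hookrightarrow E|_C$ is precisely a splitting of the pullback of (\ref{eqn: E|_C}) along $N\hookrightarrow K_C\otimes A^\vee$; equivalently the extension class of (\ref{eqn: E|_C}) in $\mathrm{Ext}^1(K_C\otimes A^\vee,Q_A)$ dies under $\mathrm{Ext}^1(K_C\otimes A^\vee,Q_A)\to\mathrm{Ext}^1(N,Q_A)$. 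The case $D=0$ says that (\ref{eqn: E|_C}) splits, which is excluded because $E$ is simple on $S$; for $D\ne 0$ one plays the vanishing of the image of this extension class against the constraints that Brill-Noether genericity of $C$ imposes on the line bundle $K_C\otimes A^\vee(-D)$ and on its space of sections, arriving at a contradiction. Controlling the extension class of (\ref{eqn: E|_C}) against special sub-line-bundles of $K_C\otimes A^\vee$, and dually of $M_A$, is the heart of the proof; the remainder is bookkeeping with the relations $g=4i-4+\rho$ and $d=3i+\rho$, and it is there that the hypothesis $i\ge 6$ (that is, $g\ge 20$) is consumed.
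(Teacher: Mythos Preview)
Your plan diverges substantially from the paper's proof, and as written it has a genuine gap. The paper does not argue on the curve via section-counting and the extension class of (\ref{eqn: E|_C}); it lifts the problem to the surface. Given a maximal destabilizing quotient $E|_C\twoheadrightarrow N$, one forms the elementary modification $G:=\ker\{E\to N\}$ on $S$, computes that its discriminant $\Delta(G)=2\,\rk(G)\,c_2(G)-(\rk(G)-1)\,c_1(G)^2$ is negative, and invokes the strong form of Bogomolov's inequality (\cite{HL02}, Theorem 7.3.4): there is a subsheaf $P\subset G$ with $\xi_{P,G}^2\ge -\Delta(G)/\bigl(\rk(P)\,\rk(G)^2\bigr)$. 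Because $\Pic(S)=\ZZ\cdot L$, the class $\xi_{P,G}$ is a rational multiple of $[C]$, and together with $\mu(G)\le\mu(P)<\mu(E)$ this pins $c_1(P)/\rk(P)$ down to finitely many values; the resulting inequalities collide with $i\ge 6$. This disposes of $\rk(M)=2$ directly; for $\rk(M)=1$ a second elementary modification, this time of $F(C)$, is built and the same Bogomolov mechanism finishes. The case $\rk(M)=3$ is in fact the easiest: the globally generated rank-one quotient $N$ has $\deg(N)\le (g-1)/2$ and $N\ne\OO_C$, contradicting the maximal gonality of $C$. So your ``delicate'' $s=3$ possibility is dispatched in one line and needs no duality or extension-class analysis.

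Your own line of attack, by contrast, does not close. For $s=2$ the crude bounds $\deg(F')\le d/3$ and $\deg(\bar F)\le 2g-2-d$ yield only $\deg(F)\le 6i-10+\rho+\lfloor\rho/3\rfloor$, which is perfectly compatible with $\deg(F)\ge g-1=4i-5+\rho$; you allude to sharpening this with Clifford and section-counts, but no inequality is actually produced, and I do not see one that works. For $s=1$ you reduce correctly to asking when the pullback of (\ref{eqn: E|_C}) along $K_C\otimes A^{\vee}(-D)\hookrightarrow K_C\otimes A^{\vee}$ splits, but ``playing the vanishing of the extension class against Brill--Noether genericity'' is a hope, not an argument. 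Even the case $D=0$ --- the non-splitting of (\ref{eqn: E|_C}) itself --- is established in the paper (Step~1 of Theorem \ref{simple}) by a Bogomolov computation on $S$, not by curve methods; simplicity of $E$ on $S$ alone does not give it. The missing idea is precisely the passage to elementary modifications on the $K3$ surface and the discriminant inequality; without it the numerical window you leave open for a destabilizing sub-line-bundle does not shut, and this is exactly where the hypothesis $i\ge 6$ is spent in the paper's argument.
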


Note that in Theorem \ref{thm: rank four}, $\mbox{dim } W^3_d(C)=\rho$. The rank $3$ version of this result was proved in \cite{FO12}. We record the following consequence of Theorem \ref{thm: rank four}:

\begin{cor}\label{merc4}
For $C\subset S$ with $g\geq 20$ and $\mathrm{Pic}(S)=\mathbb Z\cdot C$, we set $d:=\bigl\lfloor \frac{4g+14}{3} \bigr\rfloor$ and $A\in W^3_d(C)$ with $h^0(C, A)=4$. Then $E|_C$ is a stable rank $4$ bundle with $\gamma(E|_C)<\lfloor \frac{g-1}{2}\rfloor$. It follows that the statement $(M_4)$ fails for $C$.
\end{cor}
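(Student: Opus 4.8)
The plan is to read off Corollary~\ref{merc4} as a formal consequence of Theorem~\ref{thm: rank four} together with the Riemann--Roch bookkeeping recorded just before it; no new geometric idea is needed, and the only delicate point is matching up the numerology.

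First I would rewrite the hypotheses of Theorem~\ref{thm: rank four} in terms of $g$ and $d$. Putting $i:=g-d+4$ one has $g=4i-4+\rho$ and $d=3i+\rho$ with $\rho:=4d-3g-12=\rho(g,3,d)$, so the condition $\rho\geq 0$ reads $d\geq (3g+12)/4$ and the condition $i\geq 6$ reads $d\leq g-2$. I would check that the closed formula for $d$ in the statement satisfies both as soon as $g\geq 20$; it is precisely the lower bound on $g$ that secures $i\geq 6$. Since $C\in|L|$ is Brill--Noether general by \cite{L1} and $\rho\geq 0$, the variety $W^3_d(C)$ is non-empty of dimension $\rho$; moreover $W^4_d(C)$ has negative expected dimension and is therefore empty, so every $A\in W^3_d(C)$ has $h^0(C,A)=4$, while the sublocus of non-globally-generated $A$ is proper in $W^3_d(C)$, being dominated by $W^3_{d-1}(C)\times C$, of strictly smaller dimension. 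Thus a general $A\in W^3_d(C)$ is globally generated with $h^0(C,A)=4$, and for such $A$ Theorem~\ref{thm: rank four} gives that $E|_C$ is stable of rank $4$.

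Next I would carry out the numerical part. From $c_1(E_{C,A})=L$ and adjunction one gets $\det(E|_C)=L|_C=K_C$, hence $\deg(E|_C)=2g-2$ and $\mu(E|_C)=(g-1)/2$; the exactness of (\ref{eqn: E|_C}) on global sections gives $h^0(C,E|_C)=g-d+7$. Substituting into $\gamma(E)=\mu(E)-\frac{2}{n}h^0(C,E)+2$ with $n=4$ yields $\gamma(E|_C)=(d-4)/2$, and then I would verify the elementary inequality $(d-4)/2<\lfloor (g-1)/2\rfloor$ for the chosen $d$ and $g\geq 20$. To deduce that $(M_4)$ fails I must also check that $E|_C$ is admissible in the definition of $\mathrm{Cliff}_4(C)$: it is semistable (being stable), $\deg(E|_C)=2g-2\leq 4(g-1)$, and $h^0(C,E|_C)=g-d+7\geq 8$ exactly because $d\leq g-1$. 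Hence $\mathrm{Cliff}_4(C)\leq\gamma(E|_C)<\lfloor (g-1)/2\rfloor$. Since a smooth curve on a $K3$ surface with $\mathrm{Pic}(S)=\mathbb Z\cdot C$ has maximal Clifford index $\mathrm{Cliff}(C)=\lfloor (g-1)/2\rfloor$, we obtain $\mathrm{Cliff}_4(C)<\mathrm{Cliff}(C)$, so $(M_4)$ fails for $C$.

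The substantive content is entirely in Theorem~\ref{thm: rank four}; inside the corollary there is no genuine obstacle, only the bookkeeping of keeping the three constraints on $d$ --- $d\geq (3g+12)/4$ (existence of $A$), $d\leq g-2$ (applicability of Theorem~\ref{thm: rank four}), and $d\leq g-1$ (admissibility of $E|_C$ for $\mathrm{Cliff}_4$) --- simultaneously valid, which is what forces the range $g\geq 20$.
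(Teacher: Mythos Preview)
Your approach is exactly the one the paper intends: the corollary is stated without proof because it is a formal consequence of Theorem~\ref{thm: rank four} together with the Brill--Noether generality of $C$ (the only sentence the paper adds is that $\mathrm{Cliff}(C)=\lfloor\frac{g-1}{2}\rfloor$ by \cite{L1}), and you have supplied precisely the Riemann--Roch bookkeeping that is left implicit.

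Two small remarks. First, the printed formula $d=\bigl\lfloor\frac{4g+14}{3}\bigr\rfloor$ is a typo: with that value one gets $d>g$ and $i=g-d+4<0$, so Theorem~\ref{thm: rank four} does not apply. The intended value is the minimal degree with $\rho(g,3,d)\geq 0$, namely $d=\bigl\lceil\frac{3g+12}{4}\bigr\rceil=3+\bigl\lfloor\frac{3(g+1)}{4}\bigr\rfloor$ (compare the formula $d=r+\lfloor\frac{r(g+1)}{r+1}\rfloor$ in the introduction and the analogous $d=\lfloor\frac{2g+8}{3}\rfloor$ in the rank~$3$ case). With this $d$ your check goes through and $g=20$ is precisely where $i=6$. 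Second, since $\rho(g,3,d-1)<0$ for this minimal $d$, one has $W^3_{d-1}(C)=\emptyset$, so \emph{every} $A\in W^3_d(C)$ is globally generated; you do not need to restrict to a general $A$.
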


The curves $C$ appearing in Corollary \ref{merc4} are Brill-Noether general, that is, they satisfy $\mbox{Cliff}(C)=\lfloor \frac{g-1}{2} \rfloor$, see \cite{L1}.

Theorem \ref{thm: rank four} and Corollary \ref{merc4} fit into a more general set of results that are independent from the structure of $\mathrm{Pic}(S)$.
For example, we show that under mild restrictions,  on a very general $K3$ surface, the extension (\ref{eqn: E|_C}) is non-trivial and the restricted Lazarsfeld-Mukai bundle $E|_C$ is simple (see Theorem \ref{simple}). We expect that the  bundle $E|_C$  remains stable also for higher rank $r+1=h^0(C, A)$, at least when $\mbox{Pic}(S)=\mathbb Z\cdot C$. However, our method of proof based on the Bogomolov inequality, seems  not to extend easily for $r\geq 4$.

\vskip 5pt

The second topic we discuss in this paper concerns a connection between normal bundles of canonical curves and Mercat's conjecture. The question we pose is however fundamental and interesting irrespective of Mercat's conjecture. 

For a smooth non-hyperelliptic canonically embedded curve $C\subset \PP^{g-1}$ of genus $g$, we consider the normal bundle $N_{C}:=N_{C/\PP^{g-1}}$;  we then define the twist of the conormal bundle $E:=N_{C}^{\vee}\otimes K_C^{\otimes 2}$. By direct calculation
$$\mbox{det}(E)=K_C^{\otimes (g-5)} \  \mbox{ and } \ \mbox{rk}(E)=g-2.$$
In particular, the vector bundle $E$ contributes to $\mbox{Cliff}_{g-2}(C)$ if and only if $g\leq 8$.
Since $M_{K_C}(-1)=\Omega_{\PP^{g-1} | C}$, the bundle $E$ sits in the following exact sequence
\begin{equation}\label{eqr}
0\longrightarrow E\longrightarrow M_{K_C}\otimes K_C\stackrel{\gamma_{K_C}}\longrightarrow K_C^{\otimes 3}\longrightarrow 0,
\end{equation}
where $\gamma_{K_C}:H^0(C, M_{K_C}\otimes K_C)\rightarrow H^0(C, K_C^{\otimes 3})$ is the Gaussian map of $C$, see \cite{W}. The map $\gamma_{K_C}$ vanishes on symmetric tensors, hence
$\mbox{Ker}(\gamma_{K_C})=I_2(K_C)\oplus \mbox{Ker}(\psi_{K_C})$, where
$$\psi_{K_C}:=\gamma_{K_C |_{\wedge ^2 H^0(C, K_C)}}: \bigwedge^2 H^0(C, K_C)\rightarrow H^0(C, K_C^{\otimes 3}),$$
and $I_2(K_C)=K_{1,1}(C,K_C)$ is the space of quadrics containing the canonical curve $C$. The map $\psi_{K_C}$ has been studied intensely in the context of deformations
in $\PP^g$ of the cone over the canonical curve $C\subset \PP^{g-1}$, see \cite{W}. It is in particular known \cite{CHM}, \cite{V} that $\psi_{K_C}$ is surjective for a general curve $C$ of genus $g\geq 12$.

\vskip 3pt

We now specialize to the case $g=7$, when $E$ contributes to $\mbox{Cliff}_5(C)$. Then $$\mbox{rk}(E)=5\ \mbox { and } \mbox{det}(E)=K_C^{\otimes 2},$$ therefore $\mu(E)=\frac{24}{5}$. It is easy to show that the Gaussian map $\psi_{K_C}$ is injective for every smooth curve $C$ of genus $7$ having maximal Clifford index $\mbox{Cliff}(C)=3$. In particular, the space
$$H^0(C, E)=I_2(K_C)$$
is $10$-dimensional and $\gamma(E)=2+\frac{4}{5}<\mbox{Cliff}(C)$. We establish the following result:

\begin{thm}\label{g7}
The normal bundle $N_{C/\PP^6}$ of every canonical curve $C$ of genus $7$ with maximal Clifford index is stable. In particular, the Mercat conjecture $(M_5)$ fails for a general curve of genus $7$.
\end{thm}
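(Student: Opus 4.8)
The plan is to deduce stability of $N_{C/\PP^6}$ from stability of the twist $E = N_C^\vee\otimes K_C^{\otimes 2}$, since $N_C$ is stable if and only if $E$ is (tensoring by a line bundle and dualizing preserves stability). So I would work throughout with $E$, which has rank $5$, determinant $K_C^{\otimes 2}$, and slope $\mu(E) = 24/5$. The starting point is the exact sequence \eqref{eqr}, which for $g = 7$ reads $0\to E\to M_{K_C}\otimes K_C\to K_C^{\otimes 3}\to 0$, together with the identification $H^0(C,E) = I_2(K_C)$ coming from the injectivity of $\psi_{K_C}$ on curves of maximal Clifford index (a curve of genus $7$ with $\Cliff(C)=3$ is not trigonal, not bielliptic, and carries no $g^2_6$, which forces $\psi_{K_C}$ to be injective). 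For genus $7$ with $\Cliff(C)=3$ one has $\dim I_2(K_C) = \binom{7}{2} - (3\cdot 12 - 15) = 21 - 15\cdot\ldots$; in any case the excerpt records $h^0(C,E) = 10$, so $\gamma(E) = 24/5 - \tfrac{2}{5}\cdot 10 + 2 = 2 + \tfrac45$.

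The core argument is a destabilization analysis. Suppose $F\subset E$ is a subsheaf with $0 < \rk F = s \le 4$ and $\mu(F)\ge \mu(E) = 24/5$, which we may take to be saturated, so $E/F$ is torsion-free; we seek a contradiction. First I would bound the numerical invariants of $F$: since $E\hookrightarrow M_{K_C}\otimes K_C$ and $M_{K_C}$ is a stable (in fact slope-stable, by Butler/Paranjape--Ramanan type results, or because $C$ is non-hyperelliptic with $\Cliff(C)$ large) bundle of slope $\mu(M_{K_C}) = -\tfrac{2g-2}{g-1} = -2$, the bundle $M_{K_C}\otimes K_C$ has slope $-2 + 12 = 10$ and is stable; hence any rank-$s$ subsheaf $F$ of it satisfies $\deg F < 10 s$, i.e. $\mu(F) < 10$. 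More importantly, I want to control $h^0(C,F)$: since $H^0(C,E) = I_2(K_C)$ is cut out by the quadrics through the canonical curve and these generate a base-point-free net of some sort, the restriction map $H^0(E)\to E_p$ at a general point should not drop by more than the generic rank allows. The key inequality to extract is a bound on $h^0(F)$ in terms of $s$ and $\deg F$. Combining $\mu(F)\ge 24/5$ with the constraint $h^0(F)\le$ (something forced by $F\subset M_{K_C}\otimes K_C$ and by Clifford's theorem applied to $F$ and its quotients) should push $\gamma$ of $F$ or of $E/F$ below $\Cliff(C) = 3$ in a way that is impossible; concretely, a subbundle violating stability would produce, after twisting back, a subsheaf of $N_C$ computing too small a Clifford index, and one checks case by case $s = 1, 2, 3, 4$ that the numerics $g = 7$, $\deg F$, $h^0(F)$ cannot be met.

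For $s = 1$ one uses that $E$, being a subsheaf of $M_{K_C}\otimes K_C$ which has no sub-line-bundle of degree $\ge 10$, cannot contain a line bundle of degree $\ge 5$; since $24/5 > 4$ this already handles rank one. The genuinely delicate cases are $s = 2$ and $s = 3$ (with $s = 4$ being dual to $s = 1$ via $\det E = K_C^{\otimes 2}$, i.e. $E^\vee\otimes K_C^{\otimes 2}\cong\wedge^4 E$ up to twist, or simply handled by looking at the torsion-free quotient $E/F$ of rank $\le 2$): here one must rule out, say, a rank-$2$ subsheaf $F$ with $\deg F\ge 10$, and for this I would use the sequence \eqref{eqr} to intersect $F$ with the image of $E$ inside $M_{K_C}\otimes K_C$ and exploit stability of $M_{K_C}$ more carefully, together with the fact that $\wedge^2 F\subset\wedge^2 E$ gives a line bundle of bounded degree, and the global sections count $h^0(F)\le h^0(E) = 10$ combined with base-point-freeness of the net of quadrics. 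The main obstacle I anticipate is exactly this middle-rank analysis: unlike the $K3$-restriction theorems proved via the Bogomolov inequality in Theorem \ref{thm: rank four}, here there is no ambient surface to leverage, so one is forced into a hands-on study of sub-bundles of $M_{K_C}\otimes K_C$ for a genus-$7$ curve, and the estimates are tight (the slope $24/5$ is not far from integer values). Once stability of $E$, hence of $N_{C/\PP^6}$, is established, the failure of $(M_5)$ is immediate: $E$ is a semistable bundle of rank $5$ and degree $24$ with $h^0(C,E) = 10 \ge 2\cdot 5$ and $\mu(E) = 24/5 \le 5(g-1) = 30$, hence it contributes to $\Cliff_5(C)$ and yields $\Cliff_5(C)\le\gamma(E) = 2 + \tfrac45 < 3 = \Cliff(C)$ for every such curve, in particular for the general curve of genus $7$.
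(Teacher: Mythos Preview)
Your proposal has a genuine gap at the heart of the argument: you explicitly state that ``here there is no ambient surface to leverage,'' and you therefore try to run the entire destabilization analysis through the inclusion $E\subset M_{K_C}\otimes K_C$. But the paper's proof depends crucially on Mukai's spinorial realization: a genus $7$ curve with $\Cliff(C)=3$ is a linear section of $V=OG(5,10)\subset\PP^{15}$, and $E$ is the restriction of the tautological rank $5$ bundle on $V$. This gives the self-dual sequence $0\to E^{\vee}\to H^0(C,E)\otimes\OO_C\to E\to 0$, from which one extracts the key lemma that there is no surjection $E\twoheadrightarrow A$ for any $A\in W^1_5(C)$. Your approach has no substitute for this lemma, and in the middle ranks $s=2,3$ the estimates coming only from stability of $M_{K_C}$ are not sharp enough to close the argument; the paper instead invokes the already-established equalities $\Cliff_2(C)=\Cliff_3(C)=3$ (from \cite{LN3}) to bound $h^0(F)+h^0(E/F)$, and uses the spinorial lemma to eliminate the boundary case $M\cong Q_L$ with $L\in W^2_7(C)$.

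Your rank $1$ case is also not right as written: from stability of $M_{K_C}\otimes K_C$ (slope $10$) you only get that a line subbundle of $E$ has degree $<10$, not $<5$, so the sentence ``cannot contain a line bundle of degree $\ge 5$'' is a non sequitur. The paper handles $\rk(F)=1$ by assuming $\deg(F)=5$ and analysing $h^0(C,F)$: if $h^0(F)=0$ one uses that $E^{\vee}\otimes K_C$ is globally generated to contradict $h^0(K_C\otimes F^{\vee})=1$; if $h^0(F)=1$ one contradicts the base point free pencil trick; and $h^0(F)\ge 2$ is ruled out by the spinorial lemma. Your dual treatment of $\rk(F)=4$ via $\rk(E/F)=1$ is along the right lines, but the actual argument uses that the globally generated quotient line bundle would have degree $\le 4$ while $C$ has no $g^1_4$. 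In short, the numerics really are as tight as you feared, and the missing idea is precisely the Mukai structure you ruled out.
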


The proof of Theorem \ref{g7} uses in an essential way Mukai's realisation \cite{M3} of a canonical curve $C$ of genus $7$ with $\mbox{Cliff}(C)=3$ as a linear section of the $10$-dimensional spinorial variety $OG(5,10)\subset \PP^{15}$. In particular,  the vector bundle $E$ is the restriction to $C$ of the rank $5$ spinorial bundle on $OG(5,10)$, which endows $E$ with an extra structure that only exists in genus $7$.  Note that the normal bundle of every canonical curve of genus at most $6$ is unstable, and more generally, the normal bundle of a tetragonal canonical curve of any genus is unstable (see also Section 3). In particular, we have the following identification of cycles on $\cM_7$
\begin{equation}\label{ident}
\Bigl\{[C]\in \cM_7: N_C \mbox{ is unstable}\Bigr\}=\cM_{7,4}^1,
\end{equation}
where the right hand side denotes the divisor of tetragonal curves of genus $7$.
We make the following conjecture:

\begin{con}
The normal bundle $N_{C/\PP^{g-1}}$ of a general canonical curve $C$ of genus $g\geq 7$ is stable.
\end{con}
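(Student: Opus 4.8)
We do not settle the conjecture in this paper; we only outline the strategy we have in mind, which proves it in an initial range of genera and isolates the essential difficulty. The first reduction is formal: stability is an open condition in families and $\cM_g$ is irreducible, so it suffices to produce, for each $g\ge 7$, a single smooth canonical curve with stable normal bundle. (The hypothesis that $C$ be general is necessary, since $N_{C/\PP^{g-1}}$ is unstable along the tetragonal locus and for all $g\le 6$.) Moreover, dualizing a vector bundle and tensoring it with a line bundle both preserve stability, so $N_{C/\PP^{g-1}}$ is stable if and only if the twisted conormal bundle $E=N_C^{\vee}\otimes K_C^{\otimes 2}$ --- of rank $g-2$, slope $\mu(E)=\frac{(2g-2)(g-5)}{g-2}$ and determinant $K_C^{\otimes(g-5)}$ --- is stable. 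We therefore argue with $E$ and the Gaussian sequence (\ref{eqr}) throughout, exactly as in the genus $7$ case treated in Theorem \ref{g7}.

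For small $g$ the plan is to imitate the proof of Theorem \ref{g7} inside each available Mukai model. If a canonical curve is a \emph{transversal} linear section $C=X\cap\PP^{g-1}$ of a variety $X\subset\PP^N$ of the same codimension $g-2$, then $N_{C/\PP^{g-1}}\cong N_{X/\PP^N}|_C$, since the conormal bundle of $X$ in $\PP^N$ restricts, along the section, to the conormal bundle of $C$ in $\PP^{g-1}$. For the general canonical curve of genus $g\in\{7,8,9\}$, and with extra care for $g=10$, with maximal Clifford index one may take $X$ to be, respectively, the spinor tenfold $OG(5,10)\subset\PP^{15}$, the Grassmannian $G(2,6)\subset\PP^{14}$, the Lagrangian Grassmannian $LG(3,6)\subset\PP^{13}$, and the $G_2$-fivefold $X_5\subset\PP^{13}$. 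In each case $N_{X/\PP^N}$ is a homogeneous, hence polystable, bundle on $X$, and the task is to show that its restriction to the general linear-section curve remains \emph{stable}: genus $7$ is Theorem \ref{g7}, which exploits the spinor structure of $E$, and the remaining genera require the analogous restriction statement, to be obtained either from a Bogomolov-type estimate on $X$ in the spirit of Section 3 or from a restriction theorem for general linear sections. For $g=11$ one uses instead Mukai's theorem that the general curve of genus $11$ is a hyperplane section of a $K3$ surface $S\subset\PP^{11}$, whence once more $N_{C/\PP^{10}}\cong N_{S/\PP^{11}}|_C$, and deduces stability on $C$ from $\mu$-stability of $N_{S/\PP^{11}}$ with respect to $\OO_S(1)$; as $\Pic(S)=\ZZ\cdot\OO_S(1)$ for the general such $S$, this last point is accessible by the methods of the present paper transplanted from $C$ to $S$.

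For $g\ge 12$ neither a homogeneous model nor a $K3$ model is available, and this is the crux of the problem. Two lines of attack remain. The first is a direct analysis of (\ref{eqr}): a destabilizing subsheaf $F\subset E$ embeds into $M_{K_C}\otimes K_C$, which for $C$ general is stable of slope $2g-4$, but since one computes $\mu(E)<2g-4$ this inclusion alone yields no contradiction, so one must in addition control the cohomology of subsheaves of $E$ through the description $H^0(E)=I_2(K_C)\oplus\mathrm{Ker}(\psi_{K_C})$ and feed in the hypothesis $\mathrm{Cliff}(C)=\lfloor\frac{g-1}{2}\rfloor$ by means of a Clifford-type bound. The second is a degeneration of $C$ to a reducible canonical curve followed by an inductive study of the limiting torsion-free normal sheaf, in the spirit of interpolation arguments for space curves. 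In both cases the obstacle is the same: for large $g$ the bundle $E$ carries no extra structure, so ruling out \emph{every} potential destabilizing subsheaf --- and, in the degeneration approach, exhibiting a limit of $N_{C/\PP^{g-1}}$ that is itself stable rather than merely semistable --- seems to require a genuinely new input. As an intermediate target one might first aim only at \emph{semistability} of $N_{C/\PP^{g-1}}$ for the general curve of large genus, which already appears to be open.
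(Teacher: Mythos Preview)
The statement is a \emph{conjecture}; the paper does not prove it, and offers after it only a short paragraph of heuristics: that stability of normal bundles is open even for non-special embeddings, that degenerating $C\subset\PP^{g-1}$ to a union of two rational normal curves meeting in $g+1$ points shows $N_{C/\PP^{g-1}}$ is ``not too unstable,'' but that the non-integrality of $\mu(N_{C/\PP^{g-1}})$ prevents this from settling the question. Your proposal is likewise, and explicitly, not a proof but a programme, so there is nothing to compare at the level of proofs.

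That said, your outline is substantially more articulated than the paper's remarks and differs from them in strategy. The paper suggests a single degeneration (to two rational normal curves) for all $g$; you instead stratify by genus, proposing to use the Mukai homogeneous models for $g\in\{7,8,9,10\}$ via the identification $N_{C/\PP^{g-1}}\cong N_{X/\PP^N}|_C$ for a transversal linear section, a $K3$ section for $g=11$, and only then a degeneration or direct analysis of the Gaussian sequence for $g\ge 12$. This is a reasonable plan and the identifications you invoke are correct. Two caveats are worth flagging. First, ``homogeneous hence polystable'' does not by itself give stability of the restriction: for each $g\in\{8,9,10\}$ one would need an argument of the same depth as Theorem~\ref{g7}, and the special self-duality (\ref{spin2}) that drove the genus~$7$ proof has no direct analogue for $G(2,6)$, $LG(3,6)$ or the $G_2$-fivefold. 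Second, for $g=11$ you would need $\mu$-stability of $N_{S/\PP^{11}}$ \emph{and} a restriction theorem from $S$ to $C$, neither of which is established in the paper. So while your programme is coherent, none of the steps beyond $g=7$ is actually carried out, and the paper does not claim otherwise either.
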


Note that the stability of the normal bundle $N_{C/\PP^r}$ of a curve of genus $g$ is not known even in the case of a non-special embedding $C\hookrightarrow \PP^r$ given by a line bundle $L\in \mbox{Pic}(C)$ of large degree. This is in stark contrast with the case the kernel bundle $M_L=\Omega_{\PP^r| C}(1)$, whose stability easily follows by a filtration argument due to Lazarsfeld \cite{L2}. For some very partial results in this direction, see \cite{EL}. In general, one can show by degenerating a canonical curve $C\subset \PP^{g-1}$ to the transversal union of two rational normal curves in $\PP^{g-1}$ meeting transversally in $g+1$ points, that $N_{C/\PP^{g-1}}$ is not too unstable. Due to the fact that the slope
$\mu(N_{C/\PP^{g-1}})$ is not an integer, this simple minded technique does not seem to lead to a full solution, because one cannot expect to find a specialization in which the corresponding limit of the normal bundle is a direct sum of line bundles of the same degree. It is of course, natural to ask whether a generalization of the equality (\ref{ident}) exists for higher genus.

\vskip 6pt

\noindent{{\em Acknowledgments.} The second author is grateful to Shigeyuki Kondo for organizing a wonderful conference "Developments in moduli theory" in Kyoto in June 2013 to celebrate Mukai's 60th birthday. The first author was partly supported by
a grant of the Romanian Ministry of National Education, CNCS-UEFISCDI project number PN-II-ID-PCE-2012-4-0156,
and by a Humboldt fellowship. He thanks Humboldt Universit\"at zu Berlin and Max Planck Institut f\"ur Mathematik Bonn for hospitality during the preparation of this work. The second and third authors are partly financed by the Sonderforschungsbereich 647 "Raum-Zeit-Materie".}

\section{Simplicity of restricted Lazarsfeld-Mukai bundles}
\label{sec:simple}

We fix a  $K3$ surface $S$, a smooth  curve $C\subset S$ of genus $g$ and a globally generated linear series $A\in W^r_d(C)$, with $h^0(C, A)=r+1$.
Using the evaluation sequence (\ref{eqn: F}), we form the vector bundle $F=F_{C, A}$; by dualizing, we obtain an exact sequence for the dual bundle $E=E_{C, A}:=F_{C, A}^{\vee}$:
\begin{equation}
\label{eqn: E}
0\longrightarrow H^0(C, A)^{\vee}\otimes \OO_S\longrightarrow E_{C, A}\longrightarrow K_C\otimes A^{\vee}\longrightarrow  0.
\end{equation}
It is well-known \cite{M89}, \cite{L1} that $c_1(E)=[C]$ and $c_2(E)=d$; moreover
$h^0(S, F)=0$ and $h^1(S, E)=h^1(S, F)=0$. Finally, one also has that
$$\chi(S, E\otimes F)=2-2\rho(g, r, d);$$ in particular, if $E$ is a simple bundle, then $\rho(g, r, d)\geq 0$.
Assuming furthermore that $\mbox{Pic}(S)=\mathbb Z\cdot C$, it is also well-known that both $E$ and $F$ are $C$-stable bundles on $S$.

\subsection{The rank $2$ case}
We begin by showing that in rank $2$, irrespective of the structure of $\mbox{Pic}(S)$, a splitting of the restriction $E|_C$ can only be induced by an elliptic pencil on the $K3$ surface.

\begin{thm}
\label{thm: simple LM}
Let $C\subset S$ be as above and $A\in W^1_d(C)$ a base point free pencil of degree $2<d<g-1$ with $K_C\otimes A^{\vee}$
globally generated. The following conditions are equivalent:
\begin{enumerate}
\item[(i)] $E|_C\cong A\oplus (K_C\otimes A^{\vee})$;
\item[(ii)] There exists an elliptic pencil $N\in\mathrm{Pic}(S)$ such that
$N|_C= A$.
\item[(iii)] $h^0(S,E\otimes F)<h^0(C,E\otimes F|_C)$.
\end{enumerate}
\end{thm}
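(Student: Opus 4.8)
The plan is to prove $(ii)\Rightarrow(i)$ directly, then $(i)\Rightarrow(ii)$, and finally $(i)\Leftrightarrow(iii)$. First, two reductions used throughout. Since $h^{0}(C,A)=2$, the kernel bundle $M_{A}$ is a line bundle, $M_{A}\cong A^{\vee}$, so $Q_{A}=M_{A}^{\vee}\cong A$ and $(\ref{eqn: E|_C})$ reads $0\to A\to E|_{C}\to K_{C}\otimes A^{\vee}\to 0$; thus $(i)$ is exactly the statement that this extension splits. Also, $E$ has rank $2$ with $\det E=\OO_{S}(C)$, hence $E\otimes\OO_{S}(-C)\cong E^{\vee}=F$ and $E\otimes F\otimes\OO_{S}(-C)\cong F\otimes F\cong\mathrm{Sym}^{2}F\oplus\OO_{S}(-C)$.

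For $(ii)\Rightarrow(i)$: let $N$ be an elliptic pencil with $N|_{C}=A$. Since $N$ is nef and $N\cdot(N-C)=-d<0$, the class $N-C$ is not effective, so $H^{0}(S,\OO_{S}(N-C))=0$ and $H^{0}(S,N)\to H^{0}(C,A)$ is an injection between $2$-dimensional spaces, hence an isomorphism. Splicing the Koszul resolution $0\to\OO_{S}(-N)\to H^{0}(S,N)\otimes\OO_{S}\to\OO_{S}(N)\to 0$ of the base-point-free pencil $|N|$ with the surjection $\OO_{S}(N)\to N|_{C}=A$ identifies $F=E^{\vee}$ with $\ker\bigl(H^{0}(S,N)\otimes\OO_{S}\to A\bigr)$ and gives $0\to\OO_{S}(-N)\to F\to\OO_{S}(N-C)\to 0$, hence dually $0\to\OO_{S}(C-N)\to E\to\OO_{S}(N)\to 0$. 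Restricting to $C$ produces a second exact sequence $0\to K_{C}\otimes A^{\vee}\to E|_{C}\to A\to 0$. The composite of $K_{C}\otimes A^{\vee}\hookrightarrow E|_{C}$ with the surjection $E|_{C}\twoheadrightarrow K_{C}\otimes A^{\vee}$ of $(\ref{eqn: E|_C})$ is multiplication by a scalar, and it is nonzero: otherwise the inclusion factors through $A\subset E|_{C}$, yielding a nonzero map $K_{C}\otimes A^{\vee}\to A$, i.e. a section of $A^{\otimes 2}\otimes K_{C}^{\vee}$, impossible since $\deg(A^{\otimes 2}\otimes K_{C}^{\vee})=2d-2g+2<0$. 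Hence $(\ref{eqn: E|_C})$ splits.

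For $(i)\Rightarrow(ii)$: if $(i)$ holds then $E|_{C}$ has $K_{C}\otimes A^{\vee}$ as a sub-bundle of degree $2g-2-d>g-1=\mu(E|_{C})$; I would argue (via a restriction theorem for the polarized $K3$ surface, working with $C$ general in $|C|$, or directly with the distinguished sections of $E_{C,A}$) that this destabilizing subbundle is the restriction to $C$ of a line subsheaf $\OO_{S}(D)\subseteq E$, with $E/\OO_{S}(D)\cong I_{Z}\otimes\OO_{S}(C-D)$. Then $D\cdot C=2g-2-d$ and $c_{2}(E)=d$ give $(C-D)^{2}=\mathrm{length}(Z)\ge 0$ and $(C-D)\cdot C=d$; since $\OO_{S}(C-D)$ then has $\chi\ge2$ and $h^{2}=0$ it is effective with $h^{0}\ge2$, and $(C-D)|_{C}=A$ has $h^{0}=2$, which forces $Z=\emptyset$, $(C-D)^{2}=0$, $h^{0}(\OO_{S}(C-D))=2$, and — using that $A$ is a base-point-free $g^{1}_{d}$ — no fixed component; thus $C-D$ is an elliptic pencil restricting to $A$. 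For $(i)\Leftrightarrow(iii)$: apply cohomology to $0\to E\otimes F\otimes\OO_{S}(-C)\to E\otimes F\to(E\otimes F)|_{C}\to 0$. Here $H^{0}(S,E\otimes F\otimes\OO_{S}(-C))=0$: by the identification above it suffices that $H^{0}(S,\mathrm{Sym}^{2}F)=0$, and since $(\ref{eqn: F})$ realizes $F\subset H^{0}(C,A)\otimes\OO_{S}$, a section of $\mathrm{Sym}^{2}F$ is a constant in $\mathrm{Sym}^{2}H^{0}(C,A)$ lying in $\mathrm{Sym}^{2}W_{s}$ for every $s\in C$, where $W_{s}=H^{0}(C,A(-s))$; base-point-freeness of $A$ makes the lines $W_{s}$ cover $\PP\bigl(H^{0}(C,A)\bigr)$, so the section vanishes. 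Hence $\mathrm{End}_{S}(E)=H^{0}(S,E\otimes F)\hookrightarrow H^{0}(C,(E\otimes F)|_{C})=\mathrm{End}_{C}(E|_{C})$ is injective, and $(iii)$ holds iff it is not surjective, i.e. some endomorphism of $E|_{C}$ does not extend to $E$. If $(i)$ holds, the projector of $E|_{C}=A\oplus(K_{C}\otimes A^{\vee})$ onto $A$ is such an endomorphism unless it extends to an idempotent $\widetilde p$ of $E$ (note $\widetilde p^{\,2}-\widetilde p$ restricts to $0$, hence vanishes), in which case $E=\OO_{S}(N)\oplus\OO_{S}(C-N)$ with $\OO_{S}(N)|_{C}=A$, and then $c_{2}(E)=d$ forces $N^{2}=0$, while $h^{0}(A)=2$ and base-point-freeness force $h^{0}(\OO_{S}(N))=2$, so $N$ is an elliptic pencil; conversely, a non-extendable endomorphism $\phi$ of $E|_{C}$ is non-scalar with $\mathrm{tr}\,\phi,\det\phi\in H^{0}(C,\OO_{C})=\mathbb{C}$ constant, so satisfies a constant-coefficient monic quadratic, which (together with $\mathrm{Hom}(K_{C}\otimes A^{\vee},A)=0$) forces $E|_{C}\cong A\oplus(K_{C}\otimes A^{\vee})$, i.e. $(i)$.

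The main obstacle is twofold. Geometrically, the heart is the passage from $C$ to $S$ — promoting the destabilizing sub-line-bundle of $E|_{C}$ to a line subsheaf $\OO_{S}(D)\subseteq E$ and then controlling the zero-dimensional defect $Z$ via $c_{2}(E)=d$ and the Hodge index inequality $(D\cdot C)^{2}\ge D^{2}\,C^{2}$, finally excluding fixed components by $h^{0}(A)=2$ and base-point-freeness of $A$. Cohomologically, the delicate point is the bookkeeping making $(i)\Leftrightarrow(iii)$ precise when $E$ already splits on $S$: one must show that the cokernel of $\mathrm{End}_{S}(E)\to\mathrm{End}_{C}(E|_{C})$ is genuinely nonzero, equivalently that a certain coboundary into the $1$-dimensional space $H^{1}(S,\OO_{S}(-2N))$ does not vanish — which must use the geometry of the elliptic fibration (its fibres cut out the base-point-free pencil $A$ on $C$) rather than a formal Riemann–Roch count.
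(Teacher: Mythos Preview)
Your argument for $(ii)\Rightarrow(i)$ is correct and essentially the same as the paper's.

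The genuine gap is in $(i)\Rightarrow(ii)$. You propose to lift the destabilizing sub-line-bundle $K_C\otimes A^{\vee}\subset E|_C$ to a line subsheaf $\OO_S(D)\subset E$ ``via a restriction theorem \dots\ working with $C$ general in $|C|$, or directly with the distinguished sections''. None of these suggestions yields a proof: the theorem is stated with no hypothesis whatsoever on $\mathrm{Pic}(S)$ or on the position of $C$ in its linear system, so restriction theorems of Bogomolov/Flenner type (which require the polarization to be sufficiently positive relative to the discriminant, or $C$ to be general) are not available, and there is no a~priori reason the destabilizing subbundle on $C$ extends to $S$. The paper circumvents this entirely. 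Instead of lifting a subbundle, it observes that the splitting hypothesis means the extension class $[E]\in\mathrm{Ext}^1(K_C\otimes A^{\vee},H^0(A)\otimes\OO_S)$ dies in $\mathrm{Ext}^1(K_C\otimes A^{\vee},A)$, hence comes from a class in $\mathrm{Ext}^1(K_C\otimes A^{\vee},F)$; this produces a rank~$2$ bundle $G$ on $S$ fitting in $0\to F\to G\to K_C\otimes A^{\vee}\to 0$ and $0\to G\to E\to A\to 0$. One computes $c_1(G)=0$, $c_2(G)=2d-2g+2<0$, so $G$ is Bogomolov unstable on $S$, and its maximal destabilizing line bundle $M$ is then shown (using $H^0(F)=0$ and global generation of $K_C\otimes A^{\vee}$) to satisfy $M|_C=K_C\otimes A^{\vee}$; the elliptic pencil is $N:=M^{\vee}(C)$. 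This construction of $G$ is the missing idea.

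For $(i)\Leftrightarrow(iii)$ your endomorphism approach is workable but, as you yourself note, leaves a loose end when $E$ already splits on $S$: you then need the coboundary into $H^1(S,\OO_S(-2N))$ to be nonzero, and you have not proved this. The paper avoids any case analysis by a direct computation: twisting (\ref{eqn: E}) by $F$ and using $h^0(F)=h^1(F)=0$ gives $h^0(S,E\otimes F)=h^0(C,E|_C\otimes A^{\vee})$, which under $(i)$ equals $1+h^0(K_C\otimes A^{\otimes(-2)})$, whereas $h^0(C,E\otimes F|_C)=h^0(\mathrm{End}(E|_C))=2+h^0(K_C\otimes A^{\otimes(-2)})$; the difference is exactly $1$, regardless of whether $E$ splits on $S$. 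For $(iii)\Rightarrow(i)$ the paper likewise stays cohomological: $(iii)$ forces $H^1(S,E\otimes F(-C))\neq 0$, hence $H^0(C,E|_C\otimes K_C^{\vee}\otimes A)\neq 0$, and twisting (\ref{eqn: E|_C}) by $A\otimes K_C^{\vee}$ then shows the coboundary $H^0(\OO_C)\to H^1(K_C^{\vee}\otimes A^{\otimes 2})$ vanishes, i.e.\ the extension splits.
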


\begin{cor}
With notation as above, if $g\leq 2d-2$ and $A$ is not induced by an elliptic pencil on $S$,  then
$E|_C$ is simple if and only if $E$ is simple.
\end{cor}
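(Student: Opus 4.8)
The plan is to translate simplicity into a $H^0$-vanishing for a twisted endomorphism sheaf, to compare $E$ with $E|_C$ through the restriction exact sequence, and then to deduce the two implications from a single vanishing together with Theorem~\ref{thm: simple LM}. Since $F=E^{\vee}$ and $E$ is a rank-two bundle with $\det(E)=\OO_S(C)$, one has $E\otimes F=\mathcal{E}nd(E)$, $(E\otimes F)|_C=\mathcal{E}nd(E|_C)$, and $E\otimes\OO_S(-C)\cong E^{\vee}=F$. Tensoring the structure sequence $0\to\OO_S(-C)\to\OO_S\to\OO_C\to 0$ by $\mathcal{E}nd(E)$ and passing to cohomology yields an exact sequence
$$0\longrightarrow \mathrm{Hom}(E,F)\longrightarrow H^0\bigl(S,\mathcal{E}nd(E)\bigr)\stackrel{r}{\longrightarrow} H^0\bigl(C,\mathcal{E}nd(E|_C)\bigr),$$
in which $r$ sends $\mathrm{id}_E$ to $\mathrm{id}_{E|_C}$, so both spaces are at least one-dimensional.

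Granting the vanishing $\mathrm{Hom}(E,F)=0$, both implications follow at once. If $E|_C$ is simple, injectivity of $r$ gives $h^0(S,\mathcal{E}nd(E))\le h^0(C,\mathcal{E}nd(E|_C))=1$, so $E$ is simple. Conversely, if $E$ is simple but $E|_C$ is not, then $h^0(C,E\otimes F|_C)\ge 2>1=h^0(S,E\otimes F)$, which is precisely condition (iii) of Theorem~\ref{thm: simple LM}; hence condition (ii) holds and $A$ is induced by an elliptic pencil on $S$, contrary to hypothesis. The hypothesis $g\le 2d-2$, i.e. $\rho(g,1,d)\ge 0$, only marks the range where the statement has content, since simplicity of $E$ already forces $\rho(g,1,d)\ge 0$.

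The key point, which I expect to be the main obstacle, is the vanishing $\mathrm{Hom}(E,F)=0$, and I would prove it directly on $S$. Suppose $\phi\colon E\to F$ is nonzero. The induced map $\wedge^2\phi\colon\OO_S(C)=\det(E)\to\det(F)=\OO_S(-C)$ is a global section of $\OO_S(-2C)$, hence zero, so $\phi$ drops rank everywhere and $G:=\phi(E)\subset F$ is a torsion-free rank-one sheaf. Since $F$ embeds into the trivial bundle $H^0(C,A)\otimes\OO_S$, so does $G$, and therefore $G$ is isomorphic to an ideal sheaf $\mathcal{I}_W\subset\OO_S$, in particular $c_1(G)\le 0$. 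On the other hand, the kernel of $E\twoheadrightarrow G$ is a saturated rank-one subsheaf of the locally free sheaf $E$ on the smooth surface $S$, hence a line bundle $\OO_S(B)$ with $B=c_1(E)-c_1(G)=C-c_1(G)$. As $-c_1(G)$ is effective, we get $\OO_S(C)\hookrightarrow\OO_S(B)\hookrightarrow E$, whence $H^0(S,F)=H^0\bigl(S,E\otimes\OO_S(-C)\bigr)\neq 0$, contradicting the known vanishing $h^0(S,F)=0$. Thus $\phi=0$.

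Apart from this vanishing the argument is formal. The only steps requiring a little care are the two sheaf-theoretic assertions used above — that the kernel of $E\twoheadrightarrow G$ is a line bundle, and that $G$ can be realised inside a single copy of $\OO_S$ — both of which are routine on a smooth surface; and, for the converse implication, the observation that $h^0(C,\mathcal{E}nd(E|_C))\ge 2$ for a non-simple $E|_C$ does trigger condition (iii) of Theorem~\ref{thm: simple LM}, which is clear because $h^0(S,E\otimes F)=1$ whenever $E$ is simple.
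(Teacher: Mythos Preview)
Your argument is correct and follows the route the paper implicitly intends: the corollary is left as an immediate consequence of Theorem~\ref{thm: simple LM}, with only the remark that ``if $E|_C$ is simple then $E$ is simple'' is easy, and that simplicity of $E$ already forces $g\le 2d-2$. Your use of the restriction sequence and of condition (iii) of the theorem is exactly this.

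The one place where you work harder than necessary is the vanishing $\mathrm{Hom}(E,F)=H^0(S,E\otimes F(-C))=0$. The paper obtains this in one line inside the proof of (iii)$\Rightarrow$(i) of Theorem~\ref{thm: simple LM}: twist the defining sequence $0\to F\to H^0(C,A)\otimes\OO_S\to A\to 0$ by $E(-C)\cong F$ and take global sections to get $H^0(S,F\otimes F)\hookrightarrow H^0(C,A)\otimes H^0(S,F)=0$. Your direct geometric argument (factoring a putative $\phi\colon E\to F$ through a rank-one image inside $\OO_S^{\oplus 2}$ and producing a section of $F$) is perfectly valid, just longer. Either way the conclusion and the overall structure coincide with the paper's.
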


Note that it is easy to see that if $E|_C$ is simple, then $E$ is also simple. It is also known that if $E$ is simple, then automatically $g\leq 2d-2$.

\proof (of Theorem \ref{thm: simple LM})
\noindent {\em (ii)$\Rightarrow$(i).}
Let $N$ be an elliptic pencil with $N|_C=A$. Consider the exact sequence
\[
0\longrightarrow N^{\vee}\longrightarrow F\longrightarrow N(-C)\longrightarrow 0.
\]
Its restriction to $C$ gives a splitting of the dual of the sequence (\ref{eqn: E|_C})
characterizing $E|_C$. Observe that since $d<g-1$, there is no morphism from $A^{\vee}$ to $K_C^{\vee}\otimes A$.

\medskip

\noindent {\em (i)$\Rightarrow$(ii).}
Conversely, suppose that $E|_C= A\oplus (K_C\otimes A^{\vee})$.
Applying $\mathrm{Hom}(K_C\otimes A^{\vee},\ -\ )$ to the sequence
(\ref{eqn: F}), we obtain an exact sequence
\[
0\longrightarrow \mathrm{Ext}^1(K_C\otimes A^{\vee}, F)\longrightarrow
\mathrm{Ext}^1(K_C\otimes A^{\vee} ,H^0(C, A)\otimes \OO_S)\longrightarrow
\mathrm{Ext}^1(K_C\otimes A^{\vee}, A).
\]
Since the extension class $[E]\in \mathrm{Ext}^1(K_C\otimes A^{\vee}, H^0(C, A)\otimes \OO_S)$
maps to the trivial extension in $\mathrm{Ext}^1(K_C\otimes A^{\vee}, A)$, it
follows  that there exists a rank $2$ bundle $G$ on $S$ which
fits into a commutative diagram:
\begin{equation}
\label{eqn: G}
\xymatrix{ &0\ar[d]&0\ar[d]&
\\
0\ar[r]& F\ar[r]\ar[d] &  H^0(A)\otimes \OO_S\ar[d]\ar[r]&A\ar@{=}[d]\ar[r]&0
\\
0\ar[r] & G \ar[d]\ar[r] &E \ar[d]\ar[r] & A\ar[r] &0
\\
 & K_C\otimes A^{\vee} \ar@{=}[r]\ar[d] & K_C\otimes A^{\vee} \ar[d] & &
\\
 & 0 &0 &}
\end{equation}

Using that $H^0(S, F)=H^1(S, F)=0$, we obtain
$H^0(S, G)\cong H^0(C, K_C\otimes A^{\vee})$. Since $h^0(S, E)=h^0(C, A)+h^1(C, A)=h^0(C, A)+h^0(S, G)$,
and $h^1(S, E)=0$, it follows that $H^1(S, G)=0$. From the second row of  (\ref{eqn: G}), we find that
$H^0(S, G(-C))=0$.

Furthermore, we compute
$c_1(G)=0$ and $c_2(G)=2d-2g+2$. So
$c_2(G)<0=c_1^2(G)$, that is, $G$ violates Bogomolov's
inequality, and then it sits in an extension
\begin{equation}
\label{eqn: G ext}
0\longrightarrow M\longrightarrow G\longrightarrow M^{\vee}\otimes \mathcal I_{\Gamma/S}\longrightarrow 0,
\end{equation}
where $\Gamma$ is a zero-dimensional subscheme of $S$,
and $M\in \mbox{Pic}(S)$ is such that $M^2>0$ and $M\cdot H>0$ for any ample
line bundle $H$ on $S$. In particular, $H^0(S, M^{\vee})=0$,
and hence $H^0(S, M)\cong H^0(S, G)\cong H^0(C, K_C\otimes A^{\vee})\ne 0$. Moreover, since $$h^0(S,M^\vee\otimes \mathcal I_{\Gamma/S})=h^1(S,G)=0,$$
 it also follows that $H^1(S,M)=0$.

On the other hand $H^0(S, F)=0$, which implies that the composed map
$$M\longrightarrow G\longrightarrow K_C\otimes A^{\vee}$$ is non-zero; in fact, we claim that it is surjective,
that is, $M|_C= K_C\otimes A^{\vee}$.
Suppose that $M|_C=K_C\otimes A^{\vee}(-D^\prime)$, with $D^\prime\ne 0$
an effective divisor on $C$.
Since $h^0(S, G(-C))=0$, we have $h^0(S, M(-C))=0$, which
implies $h^0(S, M)\le h^0(C, M|_C)$.
Since we assumed $K_C\otimes A^{\vee}$ to be globally generated, we have that
$$h^0(S, M)\le h^0(C, K_C\otimes A^{\vee}(-D^\prime))<h^0(C, K_C\otimes A^{\vee})=h^0(S, M),$$
 a contradiction.

Setting $N:=M^{\vee}(C)$, we have shown that $N|_C=A$ and there is an exact sequence
\[
0\longrightarrow M^{\vee}\longrightarrow N\longrightarrow A\longrightarrow 0.
\]
Since $h^0(S, M^{\vee})=h^1(S, M^{\vee})=0$, it follows that $h^0(S, N)=h^0(C, A)=2$ and hence
$N$ defines an elliptic pencil.

\medskip

\noindent {\em (iii)$\Rightarrow$(i).}
From the sequence (\ref{eqn: F}) twisted by $E(-C)\cong F$, we obtain that
\[
H^0(S, E\otimes F(-C))\subset H^0(C, A)\otimes H^0(S, E(-C)),
\]
and, since $F$ has no sections, it follows that $H^0(S, E\otimes F(-C))=0$.
We have an exact sequence
\[
0\longrightarrow H^0(S, E\otimes F)\longrightarrow  H^0(S, E\otimes F|_C)\longrightarrow H^1(S, E\otimes F(-C)).
\]
The hypothesis implies that $H^1(S, E\otimes F(-C))\ne 0$. From (\ref{eqn: F}) twisted by $E(-C)\cong F$, we obtain
the exact sequence in cohomology
\[
0\longrightarrow H^0(C, E|_C\otimes K_C^{\vee}\otimes A)\longrightarrow H^1(S, E\otimes F(-C))\longrightarrow
H^0(C, A)\otimes H^1(S, E(-C))=0,
\]
therefore $h^0(C, E|_C\otimes K_C^{\vee}\otimes A)\ne 0$. The sequence (\ref{eqn: E|_C})
yields to an exact sequence
\[
0=H^0(C, K_C^{\vee}\otimes A^{\otimes 2})\longrightarrow H^0(C, E|_C\otimes K_C^{\vee}\otimes A)\longrightarrow
H^0(C, \OO_C)\to H^1(C, K_C^{\vee}\otimes A^{\otimes 2}).
\]
Then $H^0(C, E|_C\otimes K_C^{\vee}\otimes A)\to
H^0(C, \OO_C)$ is an isomorphism and, under the coboundary map $$H^0(C, \OO_C)\ni 1\mapsto 0
\in H^1(C, K_C^{\vee}\otimes A^{\otimes 2}),$$ that is, the sequence (\ref{eqn: E|_C}) is split.

\medskip

Note that we also have $h^1(S, E\otimes F(-C))=1$ and
$h^0(C, E\otimes F|_C)=h^0(S, E\otimes F)+1$.

\medskip

\noindent {\em (i)$\Rightarrow$(iii).}
From the hypothesis and from the sequence
(\ref{eqn: E|_C}), we find
$$h^0(C, E|_C\otimes A^{\vee})=h^0(C, K_C\otimes A^{\otimes (-2)})+1.$$
Furthermore, $h^0(S, E\otimes F)=h^0(C, E|_C\otimes A^{\vee})$;
twist (\ref{eqn: E}) by $F$ and use the vanishing
of $h^0(F)$ and that of $h^1(F)$.

On the other hand, since $E|_C\cong A\oplus K_C\otimes A^{\vee}$,
we have
$$h^0(C, E\otimes F|_C)=2+h^0(C, K_C\otimes A^{\otimes (-2)}),$$
hence $h^0(C, E\otimes F|_C)=h^0(S, E\otimes F)+1$.
\endproof

\vskip 4pt

\subsection{Lazarsfeld-Mukai bundles of higher rank}

We study when the restriction $E|_C$ is a simple vector bundle. Our main tool is a variant of the Bogomolov instability theorem.

\begin{thm}\label{simple}
Let $S$ be a $K3$ surface and $C\subset S$ a smooth curve of genus $g\geq 4$ such that  $\mathrm{Pic}(S)=\mathbb{Z}\cdot C$.
We fix positive integers $r$ and $d$ such that
$$\rho(g,r,d)\ge 0, \ g\geq 2r+4 \mbox{ and } d\leq \frac{3r(g-1)}{2r+2}.$$
Then for any linear series $A\in W^r_d(C)$ such that $h^0(C, A)=r+1$ and $K_C\otimes A^{\vee}$ is globally generated,
the restricted Lazarsfeld-Mukai bundle $E|_C$ is simple.
\end{thm}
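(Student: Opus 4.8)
The plan is to argue by contradiction, in the spirit of the rank $2$ computation behind diagram (\ref{eqn: G}), replacing the splitting hypothesis there by the mere failure of simplicity and Bogomolov's classical inequality by its general discriminant form. Assume $E|_C$ is not simple and pick $\varphi\in\mathrm{End}(E|_C)$ that is not a homothety. The coefficients of the characteristic polynomial of $\varphi$ are sections of $\OO_C$, hence constants, so $\varphi$ has an eigenvalue $\lambda\in\mathbb{C}$ and $\psi:=\varphi-\lambda\cdot\mathrm{id}$ is nonzero but not an isomorphism. After possibly replacing $\psi$ by a power and choosing $\lambda$ so that $\deg\mathrm{Im}(\psi)$ is as large as possible, and after saturating, we obtain a subbundle $H\subset E|_C$ of rank $s$, $1\le s\le r$, which is simultaneously a quotient of $E|_C$. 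Since $Q_A=M_A^{\vee}$ and, by hypothesis, $K_C\otimes A^{\vee}$ are globally generated, $E|_C$ is globally generated, so its quotient $E|_C/H$ is; this, together with the choice of $\lambda$ and the fact that the degrees of the generalized eigen-quotients of $\varphi$ add up to at most $\deg(E|_C)=2g-2$, forces $\deg(E|_C/H)$ into a controlled range. (When $H\cong K_C\otimes A^{\vee}$, i.e. $E|_C$ splits off $K_C\otimes A^{\vee}$, one is essentially back to the situation of Theorem \ref{thm: simple LM}.)

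Next, as in diagram (\ref{eqn: G}), the surjection $E\twoheadrightarrow E|_C\twoheadrightarrow E|_C/H$ has kernel a full-rank subsheaf $\mathcal G\subset E$ sitting in $0\to E(-C)\to\mathcal G\to H\to 0$. From $c_1(E)=[C]$, $c_2(E)=d$, $\det(E|_C)=K_C$ and Grothendieck--Riemann--Roch for the torsion sheaf $E|_C/H$ supported on $C$, one computes $c_1(\mathcal G)=(s-r)[C]$ and an expression for $c_2(\mathcal G)$ that is affine in $\deg(E|_C/H)$. The core of the proof is then the single inequality $\Delta(\mathcal G)=2(r+1)c_2(\mathcal G)-r\,c_1(\mathcal G)^2<0$, to be checked for every admissible $s\in\{1,\dots,r\}$ using $g\ge 2r+4$, $d\le\frac{3r(g-1)}{2r+2}$ and the degree bound on $E|_C/H$. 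So $\mathcal G$ violates the Bogomolov inequality, and Bogomolov's instability theorem produces a saturated subsheaf $\mathcal M\subsetneq\mathcal G$ with $\big((r+1)c_1(\mathcal M)-\mathrm{rk}(\mathcal M)c_1(\mathcal G)\big)$ in the positive cone; since $\mathrm{Pic}(S)=\mathbb Z\cdot C$ we have $c_1(\mathcal M)=m[C]$ with $(r+1)m+\mathrm{rk}(\mathcal M)(r-s)>0$.

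To conclude, I would compose $\mathcal M\hookrightarrow\mathcal G\hookrightarrow E$ with the maps out of $E$ provided by (\ref{eqn: E}), namely $E\twoheadrightarrow K_C\otimes A^{\vee}$ and the projections of its kernel $H^0(C,A)^{\vee}\otimes\OO_S$ to the trivial summands, and restrict everything to $C$. A nonzero component into $K_C\otimes A^{\vee}$ would give a nonzero section of $A^{\vee}\otimes K_C^{\otimes(1-m)}$, of negative degree once $m\ge 1$ and $d>0$; a nonzero component into a trivial summand would give a nonzero section of $\OO_S(-mC)$; both are impossible for $m\ge 1$, and the low values of $m$ (which for $s<r$ are not excluded by positivity alone) are dispatched by a closer look at $\mathcal M$ inside the modification $\mathcal G$ of $E$, using $h^0(S,E(-C))=0$, $h^1(S,E)=h^1(S,F)=0$, and the $C$-stability — hence simplicity — of $E$ and $F$ recorded above. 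This contradiction forces $\mathrm{End}(E|_C)=\mathbb{C}\cdot\mathrm{id}$.

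The main obstacle is the interplay of the two estimates above: choosing $\lambda$ (or an iterate of $\psi$) so that $\deg(E|_C/H)$ is small enough, uniformly over all possible ranks $s$ of the destabilizing image — a bookkeeping argument that leans on the global generation of $K_C\otimes A^{\vee}$ — and then verifying that $g\ge 2r+4$ together with $d\le\frac{3r(g-1)}{2r+2}$ are precisely what makes $\Delta(\mathcal G)<0$ in every case, this last bound being exactly the threshold where the sign turns. Everything else is a routine extension of the rank $2$ and rank $3$ arguments of \cite{M89}, \cite{V} and \cite{FO12}.
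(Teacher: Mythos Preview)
Your outline has a real gap at exactly the point you flag as ``the main obstacle'': you never establish the degree bound on $E|_C/H$ that would make $\Delta(\mathcal G)<0$ for \emph{every} admissible rank $s$. Saying that $H$ arises as the (saturated) image of some $\psi=\varphi-\lambda\cdot\mathrm{id}$ tells you that $H$ is both a sub and a quotient of $E|_C$, hence globally generated, but this only gives $\deg(H)\ge 0$; it gives no useful upper bound on $\deg(H)$, and the sentence about ``generalized eigen-quotients adding up to at most $2g-2$'' does not pin down any single summand. Consequently $c_2(\mathcal G)$, and with it $\Delta(\mathcal G)$, depends on the unknown pair $(s,\deg H)$ and can well be non-negative (for instance, if by chance $E|_C/H=K_C\otimes A^{\vee}$, the map $E\to K_C\otimes A^{\vee}$ is the one in (\ref{eqn: E}) and your $\mathcal G=H^0(C,A)^{\vee}\otimes\mathcal O_S$ has $\Delta(\mathcal G)=0$). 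The subsequent Bogomolov step and the contradiction with $\mathrm{Pic}(S)=\mathbb Z\cdot C$ are therefore not available in general.

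The paper avoids this by a preliminary cohomological reduction that you are missing. Using that $E$ is stable (hence simple) on $S$ and the restriction sequence $H^0(S,E\otimes F)\to H^0(C,E\otimes F|_C)\to H^1(S,E\otimes F(-C))$, one shows that if $E|_C$ were non-simple then $H^1(S,E\otimes F(-C))\neq 0$; twisting (\ref{eqn: F}) by $E(-C)$ and using $H^1(S,E(-C))=0$ this forces $H^0(C,E|_C\otimes A\otimes K_C^{\vee})\neq 0$, and since $Q_A$ is stable with $\mu(Q_A\otimes A\otimes K_C^{\vee})<0$ the only way this can happen is that the coboundary of (\ref{eqn: E|_C}) vanishes, i.e.\ the extension (\ref{eqn: E|_C}) \emph{splits}. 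Thus the only elementary modification one must analyse is the specific one coming from the surjection $F(C)\twoheadrightarrow A$; its discriminant is $4d(r+1)-8r(g-1)$, and the Bogomolov subsheaf, squeezed between the slopes of $G$ and of the stable bundle $F(C)$, then yields $2d(r+1)>3r(g-1)$, which is exactly what the hypothesis $d\le\frac{3r(g-1)}{2r+2}$ excludes. In short, the paper first proves ``non-simple $\Rightarrow$ (\ref{eqn: E|_C}) splits'' and only then runs a single Bogomolov computation; your sketch tries to run Bogomolov against an uncontrolled family of quotients, and that does not close.
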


Note that in the special case $\rho(g, r, d)=0$, the constraints
from the previous statement give rise to the bound $g>2r+5$.

\proof
{\em Step 1.} We first establish that the natural extension (\ref{eqn: E|_C}), that is,
$$
0\longrightarrow Q_A \longrightarrow E|_C\longrightarrow K_C\otimes A^{\vee}\longrightarrow 0
$$
is non-trivial. Assuming that (\ref{eqn: E|_C}) is trivial. Then there is an
injective morphism from $K_C\otimes A^{\vee}$ to $E|_C$ and hence
a surjective map
$F(C)\to A$. Then
$$G:=\mbox{Ker}\{F(C)\to A\}$$ is a vector
bundle of rank $r+1$ with Chern classes
$c_1(G)=(r-1)[C]$ and
$$
c_2(G)=c_2(F(C))-c_1(F(C))\cdot C+\deg(A)=2d+r(r-3)(g-1).
$$
We compute the discriminant of $G$
$$
\Delta(G)=2\rk(G)c_2(G)-(\rk(G)-1)c_1^2(G)=4d(r+1)-8r(g-1)<0,
$$
hence $G$ is unstable. Applying \cite{HL02} Theorem 7.3.4, there exists
a subsheaf $M\subset G$ with
$$
\xi_{M,G}^2\ge-\frac{\Delta(G)}{r(r+1)^2},
$$
where
$
\xi_{M,G}=c_1(M)/\rk(M)-c_1(G)/\rk(G).
$ Setting $c_1(M)=k\cdot [C]$ and $s:=\rk(M)$, the previous inequality becomes
$$
\left(\frac{k}{s}-\frac{r-1}{r+1}\right)^2(2g-2)\ge \frac{8r(g-1)-4d(r+1)}{r(r+1)^2}.
$$

Note that $M$ destabilizes $G$, which coupled with the stability of $F(C)$ yields
$$
\frac{r-1}{r+1}\leq \frac{k}{s}< \frac{r}{r+1},
$$
implying after manipulations  $2d(r+1)>3(g-1)r$, thus contradicting the hypothesis.

\medskip

{\em Step 2.} Assuming that $E|_C$ is non-simple, we
deduce that the extension (\ref{eqn: E|_C}) splits. We consider the exact sequence
$$
H^0(S, E\otimes F)\longrightarrow H^0(C, E\otimes F|_C)\longrightarrow H^1(S, E\otimes F(-C)).
$$
and it suffices to show that $H^1(S, E\otimes F(-C))=0$. Assuming this not to be the case, twisting (\ref{eqn: F}) by $E(-C)$ induces
the exact sequence
$$
H^0(C, A\otimes E|_C\otimes K_C^{\vee})\longrightarrow H^1(S, E\otimes F(-C))\longrightarrow H^0(C, A)\otimes H^1(S, E(-C)).
$$

Since $H^1(S, E(-C))=0$, we obtain that $H^0(C, A\otimes E|_C\otimes K_C^{\vee})\ne 0$.
Furthermore, $Q_A$ is a stable bundle and since $\mu(Q_A\otimes A\otimes K_C^{\vee})<0$, we find that
$$H^0(C, Q_A\otimes A\otimes K_C^{\vee})=0,$$ hence we also have the sequence
induced from (\ref{eqn: E|_C}) after twisting with $A\otimes K_C^{\vee}$
$$
0\longrightarrow  H^0(C, E|_C\otimes K_C^{\vee}\otimes A)\longrightarrow
H^0(C, \mathcal O_C)\longrightarrow H^1(C, K_C^{\vee}\otimes A\otimes Q_A).
$$
We conclude that the coboundary map $H^0(C, \mathcal O_C)\to H^1(C, Q_A\otimes A\otimes K_C^{\vee})$
is trivial, that is, $E|_C\cong Q_A\oplus (K_C\otimes A^{\vee})
$, which completes the proof.
\endproof

\section{Stability of restricted Lazarsfeld-Mukai bundles}
\label{sec:stable}

\subsection{The rank $2$ case} If $C\subset S$ is an ample curve, then with one exception ($g=10$ and $C$ a smooth plane sextic),  $\mbox{Cliff}(C)$ is computed by a pencil, see \cite{CP95} Proposition 3.3. We show that in rank $2$ the semistability of the LM bundle is preserved under restriction.

\begin{thm}
\label{thm: stable LM}
Let $S$ be a $K3$ surface, $C\subset S$ an ample curve of genus $g\geq 4$ and $A\in W^1_d(C)$ a pencil computing $\mathrm{Cliff}(C)$.
If $E_{C,A}$ is $C$-semistable on $S$, then $E|_C$ is also semistable on $C$. Moreover, if $E_{C,A}$ is $C$-stable on $S$, then $E|_C$ is stable on $C$.
\end{thm}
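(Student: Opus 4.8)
The plan is to argue by contradiction, transplanting a destabilizing sub-line-bundle of $E|_C$ to a rank-$2$ sheaf on $S$ violating Bogomolov's inequality, and then playing the resulting line bundle on $S$ against the $C$-(semi)stability of $E$; this runs parallel to the proof of Theorem~\ref{simple}. First note that $\det(E|_C)=\det(E)|_C=\OO_S(C)|_C=K_C$ by adjunction, so $E|_C$ has rank $2$ and slope $g-1$. Suppose $E|_C$ is not semistable (respectively, not stable). Then there is a sub-line-bundle $L\subset E|_C$ with $\deg L\ge g$ (resp.\ $\ge g-1$); equivalently the quotient line bundle $M:=K_C\otimes L^{\vee}$ has $\ell:=\deg M\le g-2$ (resp.\ $\le g-1$). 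Composing the surjection $E\twoheadrightarrow E|_C$ with $E|_C\twoheadrightarrow M$ gives a surjection $E\to i_*M$ of sheaves on $S$, where $i:C\hookrightarrow S$, and I put $E':=\ker(E\to i_*M)$, a torsion-free rank-$2$ sheaf on $S$.

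A Riemann--Roch computation on the $K3$ surface ($K_S=\OO_S$) yields $c_1(E')=c_1(E)-[C]=0$ and $c_2(E')=d+\ell-2g+2$. Since $A$ computes $\Cliff(C)$ one has $d=\Cliff(C)+2\le\lfloor(g-1)/2\rfloor+2<g$, hence $c_2(E')\le d-g<0$ and the discriminant $\Delta(E')=4c_2(E')$ is negative (with the analogous strict bound in the stable case); the same holds for the reflexive hull $G:=(E')^{\vee\vee}$, which is a vector bundle on the smooth surface $S$ with $c_1(G)=0$ and $\Delta(G)\le\Delta(E')<0$. The Bogomolov instability theorem, invoked as in the proof of Theorem~\ref{simple}, then produces $N\in\Pic(S)$ with $N\hookrightarrow G$, with $N\cdot H>0$ for every ample $H$, and with $4N^2\ge-\Delta(G)\ge-\Delta(E')$, i.e.\ $N^2\ge 2g-2-d-\ell>0$.

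Next I combine three inequalities on $N$. The rank-one subsheaf $N\cap E'$ of $E'\subset E$ (intersection taken inside $G$) has $c_1(N\cap E')=N$, so the $C$-semistability of $E$ forces $N\cdot C=\mu_C(N\cap E')\le\mu_C(E)=g-1$, with strict inequality if $E$ is $C$-stable. The Hodge index theorem for the polarization $C$ (with $C^2=2g-2>0$) gives $(N\cdot C)^2\ge N^2\cdot C^2$. Together with $N^2\ge 2g-2-d-\ell$ this produces
$$(g-1)^2\ \ge\ (N\cdot C)^2\ \ge\ (2g-2-d-\ell)(2g-2),$$
so $\ell\ge\tfrac32(g-1)-d$. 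For $d<(g+1)/2$ this contradicts $\ell\le g-2$, and in the stable case ($\ell\le g-1$, $N\cdot C<g-1$) the same chain gives a contradiction once $d\le(g-1)/2$. Thus the statement holds whenever $\Cliff(C)$ is not close to $\lfloor(g-1)/2\rfloor$, and only a short list of destabilizing degrees $\ell$ near the top — occurring only when $C$ is (nearly) Brill--Noether general — remains.

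That extremal range is the crux, and there the inequalities above are forced to be essentially equalities. Equality in the Hodge index theorem makes $N$ numerically proportional to $C$, so that $2N\equiv C$ in $\mathrm{NS}(S)$ and $\mu_C(N\cap E')=g-1=\mu_C(E)$; for the \emph{stability} statement this is already a contradiction with the $C$-stability of $E$. For the \emph{semistability} statement one must rule the configuration out by other means: the equality constraints force $L$ and $M=K_C\otimes L^{\vee}$ to \emph{both} compute $\Cliff(C)$, with $\deg L$ near $g$ and hence $h^0(L)$ correspondingly large, while $0\to L\to E|_C\to M\to 0$ is exact on global sections; since $C$ is ample on a $K3$ surface, \cite{CP95} severely restricts the line bundles that can compute $\Cliff(C)$ (in particular it is computed by a pencil), and this gives the final contradiction. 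I expect this extremal case to be the main obstacle: the Bogomolov inequality that settles the generic range becomes too weak at the top destabilizing degrees, so one is forced to exploit the equality case of the Hodge index theorem together with the minimality encoded in ``$A$ computes $\Cliff(C)$'' and the classification of Clifford-index-computing line bundles on $C$.
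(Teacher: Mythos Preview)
Your setup coincides with the paper's: you form the same elementary modification $E'=\ker(E\to i_*M)$ (the paper calls it $V$), compute $c_1=0$, $c_2<0$, and invoke Bogomolov to produce a line bundle $N\subset V$ with $N$ in the positive cone. The divergence comes next, and the gap is real.

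You bound $N\cdot C$ from above by $\mu_C(E)=g-1$ (using $C$-semistability of $E$) and from below via Hodge index and $N^2\ge 2g-2-d-\ell$. But as you yourself observe, the resulting inequality $\ell\ge \tfrac32(g-1)-d$ leaves a window of one or two values of $\ell$ open precisely when $d$ is $\lfloor(g+3)/2\rfloor$ or $\lfloor(g+1)/2\rfloor$, i.e.\ exactly when $C$ is Brill--Noether general or nearly so --- which is the case of primary interest (e.g.\ $\mathrm{Pic}(S)=\mathbb Z\cdot C$). Your attempt to close this window by ``equality in Hodge index forces $2N\equiv C$'' does not work: you are not at equality but in a range, and nothing forces $N$ to be numerically divisible by $2$. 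The final paragraph is a sketch of hopes rather than an argument; invoking \cite{CP95} tells you $\mathrm{Cliff}(C)$ is computed by a pencil, but you haven't shown that $L$ or $K_C\otimes L^{\vee}$ computes $\mathrm{Cliff}(C)$, nor why that would finish.

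The paper avoids numerics entirely at this step and instead \emph{identifies} $N|_C$ with the destabilizing line bundle $L$ on the nose. Writing $L=K_C(-D-D_1)$ with $A=\OO_C(D)$ and $D_1\ge 0$ effective (so that $i_*M=A(D_1)$), one first shows from the two short exact sequences defining $V$ that $h^0(S,N)=h^0(S,V)=h^0(C,K_C(-D-D_1))$ and $h^0(S,N(-C))=0$. Next, comparing $h^0(S,E)=h^0(A)+h^1(A)$ with $h^0(S,E)\le h^0(A(D_1))+h^1(A(D_1))$ forces $\mathrm{Cliff}(A(D_1))=\mathrm{Cliff}(C)$, so in particular $K_C(-D-D_1)$ is globally generated. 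Since $N\not\subset F$ (as $H^0(F)=0$ but $H^0(N)\neq 0$), the map $N\to V\to K_C(-D-D_1)$ is nonzero; if it were not surjective one would get $h^0(N)\le h^0(N|_C)<h^0(K_C(-D-D_1))=h^0(N)$, a contradiction. Hence $N|_C=L$ and $N\cdot C=\deg L\ge g$ (resp.\ $\ge g-1$), directly violating the $C$-(semi)stability of $E$. This section-counting step, using that the quotient pencil $A(D_1)$ still computes the Clifford index, is the idea your argument is missing.
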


\proof
The proof of the stability is similar, and hence we discuss the semistability part only.
We write $A=\OO_C(D)$, where $D$ is an effective divisor on $C$.  Suppose $E|_C$ is unstable and consider an exact sequence
\[
0\longrightarrow L_1\longrightarrow E|_C\longrightarrow K_C\otimes L_1^{\vee}\longrightarrow 0,
\]
 with
$\mathrm{deg}(L_1)\ge g$. Since $L_1\nsubseteq A$, the composed map
$L_1\to E|_C\to K_C\otimes A^{\vee}$
must be non-zero, that is, $L_1=K_C(-D-D_1)$,
where $D_1$ is an effective divisor on $C$. Set $d_1:=\mathrm{deg}(D_1)$. Consider the elementary modification
\begin{equation}
\label{eqn: V}
0\longrightarrow V\longrightarrow E\longrightarrow A(D_1)\longrightarrow 0
\end{equation}
induced by the composition
$E\to E|_C\to A(D_1)$. Then
$$c_1(V)=0\  \mbox{ and    } \ c_2(V)=2d+d_1-2g+2<0,$$ hence $V$ is unstable with respect to any polarization and fits in an exact sequence
\begin{equation}
\label{eqn: V ext}
0\longrightarrow M\longrightarrow V\longrightarrow M^{\vee}\otimes \mathcal I_{\Gamma/S} \longrightarrow 0,
\end{equation}
where $\Gamma\subset S$ is a $0$-dimensional subscheme and $M$ is a divisor class that intersects positively any ample class on $S$ and with $M^2>0$.
From (\ref{eqn: V}) and (\ref{eqn: V ext}) we find that $H^0(S, M)\cong H^0(S, V)$ and $H^0(S, M(-C))=0$.
Dualizing (\ref{eqn: V}), we obtain the sequence
\[
0\longrightarrow F\longrightarrow V^{\vee}\longrightarrow K_C(-D-D_1)\longrightarrow 0,
\]
from which, using that $V\cong V^{\vee}$, we obtain $H^0(S, V)=H^0(C, K_C(-D-D_1))$.
\vskip 3pt

We claim that $\mathrm{Cliff}(A(D_1))=\mathrm{Cliff}(C)$.
Recall that $h^0(S, E)=h^0(C, A)+h^1(C, A)$, and, from the sequence
(\ref{eqn: V}) we write
$$h^0(S, E)\le h^0(C, A(D_1))+h^1(C, A(D_1)).$$
By assumption, the pencil $A$ computes $\mathrm{Cliff}(C)$, which implies
\[
\mathrm{Cliff}(C)=g+1-h^0(A)-h^1(A)\ge g+1-h^0(A(D_1))-h^1(A(D_1))
=\mathrm{Cliff}(A(D_1)).
\]
It follows that $\mathrm{Cliff}(A(D_1))=\mathrm{Cliff}(C)$, in particular $K_C(-D-D_1)$ is globally
generated.

\vskip 3pt

Clearly, $M\nsubseteq F$, hence the composition  $\varphi: M\to V\to K_C(-D-D_1)$ is non-zero and one writes $\mbox{Im}(\varphi)=K_C(-D-D_1-D_2)$, where $D_2$
is an effective divisor on $C$.
If $D_2\ne 0$, then one has the sequence of inequalities
\[
h^0(S, M)\le h^0(C, K_C(-D-D_1-D_2))<h^0(C, K_C(-D-D_1))=h^0(S, M),
\]
a contradiction. Therefore $M|_C=K_C(-D-D_1)$. Viewing $M$ as a subsheaf of $E$, we find   $\mu(M)=M\cdot C=\mathrm{deg}(L_1)>\mu(E)$, thus bringing the proof to an end.
\endproof

\begin{rmk}
If $E_{C, A}$ is stable, then it is simple  and hence $d=\lfloor \frac{g+3}{2}\rfloor $, see \cite{L1}. Conversely, if $C'\subset S$ is an ample curve of genus $g$ and gonality $\lfloor \frac{g+3}{2}\rfloor$, then it was shown in \cite{LC} that the LM bundle $E_{C,A}$ corresponding to a general curve $C\in |\OO_S(C')|$ and a pencil $A\in W^1_{\lfloor \frac{g+3}{2}\rfloor}(C)$ is $C$-semistable (even stable when $g$ is odd).
\end{rmk}

\subsection{Stability of Lazarsfeld-Mukai bundles of rank four}

We show that restrictions of LM bundles of rank $4$ on very general $K3$ surfaces of genus $g\geq 20$ are stable. Similar results were established in \cite{V} and \cite{FO12} for rank $2$ and $3$ respectively. We fix integers $i\geq 6$ and  $\rho\geq 0$ and write
$$g:=4i-4+\rho \ \ \mbox{ and } \ \ d:=3i+\rho,$$
so that $\rho(g, 3, d)=\rho$.  Let $S$ be a $K3$ surface and $C\subset S$ a curve of genus $g$ such that  $\mbox{Pic}(S)=\mathbb Z\cdot C$, and pick a globally generated linear series $A\in W^3_d(C)$ with $h^0(C, A)=4$.

\vskip 5pt

\noindent \emph{Proof of Theorem \ref{thm: rank four}.} Our previous results show that $E|_C$ is simple, hence indecomposable. Suppose $E|_C$ is not stable and fix a maximal destabilizing sequence
\[
0\longrightarrow M\longrightarrow E|_C\longrightarrow N\longrightarrow 0.
\]

Put $d_N:=\mathrm{deg}(N)$ and $d_M:=\mathrm{deg}(M)=2g-2-d_N$. Since $M$ is destabilizing,
\begin{equation}
 \label{eqn: M destab}
\frac{d_M}{\mathrm{rk}(M)}\ge \frac{g-1}{2},\ \ \ \frac{d_N}{\mathrm{rk}(N)}\le \frac{g-1}{2}.
\end{equation}

The bundle $N$, being a quotient of $E$, is globally generated. Since $H^0(C, E|_C^{\vee})=0$, clearly $N\neq \OO_C$, therefore $h^0(C, N)\geq 2$. From the inequalities (\ref{eqn: M destab}) it follows that $\mbox{rk}(N)>1$, because $C$ has maximal gonality.

\medskip

{\em Step 1.} We prove that $M$ is a line bundle. Assume that, on the contrary,
$$\mbox{rk}(M)=\mbox{rk}(N)=2$$ and
consider the elementary modification $G:=\mathrm{Ker}\{E\to N\}$.
Its Chern classes are given as follows:
$$
 c_1(G)=-[C],\ \  \
c_2(G)=d+d_N-2(g-1),
$$
and its discriminant equals
$\Delta(G)=-64 i + 110 + 8 d_N-14\rho<0$,
because of (\ref{eqn: M destab}). In particular, there exists a saturated subsheaf $F\subset G$
which verifies the inequalities
\begin{equation}
\label{eqn: destab}
\mu(G)\le \mu(F)<\mu(E), \ \ \mbox{ and }
\end{equation}
\begin{equation}
\label{eqn: HL}
 \xi^2_{F,G}\ge -\frac{\Delta(G)}{48}.
\end{equation}

Write $c_1(F)=\alpha \cdot [C]$ and $\mathrm{rk}(F)=\beta\le 3$. The above
inequality (\ref{eqn: HL}) becomes
\[
  \left(\frac{\alpha}{\beta}+\frac{1}{4}\right)^2(2g-2)
\ge -\frac{\Delta(G)}{48}.
\]

We apply (\ref{eqn: destab}) for $\mu(F)=\alpha(2g-2)/\beta$ and obtain
$$-\frac{1}{4}\leq \frac{\alpha}{\beta}<\frac{1}{4},$$
hence $\alpha =0$, and the inequality (\ref{eqn: HL}) reads in this case
$
d_N\ge 5i-10+\rho.
$ Recalling that $d_N\le g-1=4i-5+\rho$, we obtain a contradiction whenever $i\ge 6$.

\medskip

{\em Step 2.} We construct an elementary modification, in order to reach a contradiction.

\medskip

From (\ref{eqn: M destab}), we have $d_M\ge \frac{g-1}{2}$.
The composite map $M\rightarrow E|_C\rightarrow K_C\otimes A^{\vee}$ is not zero, for else $M\hookrightarrow Q_A$ and since $\mu(Q_A\otimes M^{\vee})<0$, one contradicts the semistability of $Q_A$. We set $A_1:=K_C\otimes A^{\vee}\otimes M^{\vee}$ and obtain a surjection $F(C){|_C}\to A\otimes A_1$
inducing, as before, an elementary modification
$$V:=\mbox{Ker}\{F(C)\rightarrow A\otimes A_1\}.$$

By direct computation we show that $\Delta(V)<0$.
Indeed, we compute
$$c_1(V)=2\cdot [C],\ \ \ c_2(V)=d+2g-2-d_M, \ \ \mbox{ hence }
$$
$$
 \Delta(V)=8c_2(V)-3c_1^2(V)=8(d-d_M-g+1)=
8(5-d_M-i)<0.
$$

We obtain a destabilizing sheaf $P\subset V$,
with $\rk(P)=b\le 3$ and $c_1(P):=a\cdot [C]$, such that the following inequalities are both satisfied
\begin{equation}
\label{eqn: HL 2}
   \left(\frac{a}{b}-\frac{1}{2}\right)^2(2g-2)\ge
-\frac{\Delta(V)}{48} \ \ \  \mbox{ and } \ \ \ \mu(V)\le\mu(P)<\mu(F(C)).
\end{equation}

The second inequality gives $\frac{1}{2}\le \frac{a}{b}< \frac{3}{4}$, which leaves two possibilities: either $a=1$ and $b=2$, when via (\ref{eqn: HL 2}) one finds
that $\Delta(V)\geq 0$, a contradiction, or else $a=2$ and $b=3$,
when inequalities (\ref{eqn: HL 2}) and (\ref{eqn: M destab}) clash.
\hfill
$\Box$

\section{Normal bundle of canonical curves of genus $7$}

The aim of this section is to prove Theorem \ref{g7} and we begin by recalling Mukai's results \cite{M3} on canonical curves of genus $7$. We choose a vector space $U:=\mathbb C^{10}$ and a non-degenerate quadratic form $q:U\rightarrow \mathbb C$, defining a  smooth $8$-dimensional quadric
$Q\subset \PP(U)=\PP^9$.

The algebraic group $\mbox{{\bf Spin}}(U)$ corresponding to the Dynkin diagram $D_5$ admits two $16$-dimensional half-spin representations $\ss^+$ and $\ss^-$, which correspond to maximal weights $\alpha^+$ and $\alpha^-$ respectively. The homogeneous spaces $V^{\pm}:=\mbox{{\bf Spin}}(U)/P(\alpha^{\pm})$ are both $10$-dimensional and can be realized as the two irreducible components of the Grassmannian $G_q(5, U)$ of projective $4$-planes inside $\PP(U)$ which are isotropic with respect to the quadratic form $q$. From now on, we set
$$V:=V^+\subset \PP(\ss^+)=\PP^{15}.$$
Note that $\mbox{Aut}(V)=SO(10)$. If $\E$ is the restriction to $V$ of the tautological bundle on $G(5, 10)$, one has an exact sequence of vector bundles on $V$:
\begin{equation}\label{spin}
0\longrightarrow \E^{\vee}\longrightarrow U\otimes \OO_V\longrightarrow \E\longrightarrow 0.
\end{equation}
By the adjunction formula, smooth curvilinear sections of $V$ are canonical curves of genus $7$ and Mukai \cite{M3} showed that \emph{each} curve $[C]\in \cM_7$ with $\mbox{Cliff}(C)=3$ appears in this way. Precisely, there is a birational map
$$\alpha:G(7, 16)\dblq SO(10)\dashrightarrow \mm_7, \ \ \alpha(\Lambda):=[\Lambda\cap V],$$
where $\Lambda\cong \PP^6$. Given a curve $[C]\in \cM_7$, the inverse $\alpha^{-1}([C])$ is constructed precisely via the twist of the conormal bundle on $C$ mentioned in the introduction.
\vskip 3pt

Let $C\subset \PP^{6}$ be a smooth canonical curve with $\mbox{Cliff}(C)=3$, and set $E:=N_{C\PP^6}^{\vee}(2)$. One has an identification $H^0(C, E)=I_2(K_C)$ and $E$ is a globally generated
bundle. The tautological map
$$\phi_E:C\rightarrow G(5, H^0(C,E))$$ is easily shown to be injective and its image lies on $V$. In particular, the vector bundle $E$ is the restricted spinorial bundle, that is, $E=\E_{|C}$ and one has an exact sequence:
\begin{equation}\label{spin2}
0\longrightarrow E^{\vee}\longrightarrow H^0(C, E)\otimes C\longrightarrow E\longrightarrow 0.
\end{equation}
Note that $W^1_4(C)=\emptyset$, while $W^1_5(C)$ is a curve. We are going to make essential use of the following fact:
\begin{lem}
Let $C$ as above and $A\in W^1_5(C)$. Then there are no surjections $E\twoheadrightarrow A$.
\end{lem}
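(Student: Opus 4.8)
The plan is to argue by contradiction, translating a hypothetical surjection into a statement about quadrics through $C$ and invoking the structure of pencils of quadrics. Suppose $p\colon E\twoheadrightarrow A$ is a surjection with $A\in W^1_5(C)$, and set $G:=\ker(p)$, a rank‑$4$ subbundle of $E$, so that dualising gives a sub‑line‑bundle $A^\vee\hookrightarrow E^\vee$. Since $E=N_C^\vee(2)$ with $N_C^\vee=\mathcal{I}_C/\mathcal{I}_C^2$, the sequence \eqref{spin2} is the evaluation sequence of the globally generated bundle $E$, and a local computation (using $H^0(C,E)=I_2(K_C)$) identifies, for every $x\in C$,
$$E^\vee_x=\ker\bigl(I_2(K_C)\longrightarrow E_x\bigr)=\{\,Q\in I_2(K_C)\ :\ Q\text{ is singular at }x\,\},$$
a $5$‑dimensional subspace. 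Composing $A^\vee\hookrightarrow E^\vee\hookrightarrow I_2(K_C)\otimes\OO_C$ yields a morphism $\gamma\colon C\to\PP(I_2(K_C))=\PP^9$ with $\gamma^*\OO(1)=A$; since $h^0(C,A)=2$ its image spans a line $\PP(\Pi)$, where $\Pi\subset I_2(K_C)$ is a $2$‑dimensional space (a pencil) of quadrics containing $C$.

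Before continuing one must check that $\gamma$ is a genuine degree‑$5$ morphism and that $A^\vee=E^\vee\cap(\Pi\otimes\OO_C)$. If that intersection had rank $2$, then every $Q\in\Pi$ would be singular at every point of $C$, hence — $\mathrm{Sing}(Q)$ being a linear space and $C$ spanning $\PP^6$ — singular along all of $\PP^6$, so $Q=0$; impossible. Thus the intersection has rank $1$; its saturation $\bar T$ inside $\Pi\otimes\OO_C\cong\OO_C^{\oplus2}$ has $\bar T^\vee$ globally generated, so $\bar T^\vee$ is either $\OO_C$ — excluded, since then $\gamma$ is constant while $\deg A=5$ — or a base‑point‑free pencil, which by $W^1_4(C)=\emptyset$ has degree $\ge5$; comparing with $\deg A^\vee=-5$ forces $\bar T=A^\vee$. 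Hence $A^\vee=E^\vee\cap(\Pi\otimes\OO_C)$ is a line subbundle of $\OO_C^{\oplus2}$ and $\gamma\colon C\to\PP(\Pi)\cong\PP^1$ is the surjective degree‑$5$ map attached to $A^\vee$. In particular every member of $\Pi$ is singular (being singular at some point of $C$), so $\PP(\Pi)$ is a line contained in the discriminant hypersurface $\Delta\subset\PP^{27}$ of quadrics in $\PP^6$, and for generic $[Q]\in\PP(\Pi)$ the $5$ points of $\gamma^{-1}([Q])$ all lie on the linear space $\mathrm{Sing}(Q)$.

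The contradiction comes from the Kronecker–Weierstrass normal form of the pencil of symmetric $7\times7$ matrices cutting out $\Pi$. As $\det$ vanishes identically on $\PP(\Pi)$, this pencil has $k\ge1$ pairs of singular (minimal‑index) blocks, of sizes $2\varepsilon_i+1$ with $\sum_i(2\varepsilon_i+1)\le7$; a generic member has corank exactly $k$, and $\mathrm{Sing}(Q)$ is then the projectivisation of the span of the $k$ "moving kernels'' $\nu_i(t)$, where $\nu_i(t)$ sweeps a rational normal curve $R_i$ of degree $\varepsilon_i$ inside a $(\varepsilon_i+1)$‑dimensional coordinate subspace, the subspaces for distinct $i$ being in direct sum. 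If $k=1$, then $\mathrm{Sing}(Q)$ is a single point for generic $Q$, so $\gamma^{-1}([Q])=5\cdot\{\text{that point}\}$ for generic $[Q]$: $\gamma$ would be totally ramified over a general point of $\PP^1$, impossible for a degree‑$5$ morphism of smooth curves. If $k\ge2$, then $C\subseteq\overline{\bigcup_{[Q]}\mathrm{Sing}(Q)}$, which is the union of the join of $R_1,\dots,R_k$ — a linear space of dimension $\sum_i(\varepsilon_i+1)-1$, which is $\le5$ since $\sum_i(2\varepsilon_i+1)\le7$ and $\Pi$ consists of non‑zero quadrics (so $k\le6$) — with the finitely many linear spaces $\mathrm{Sing}(Q_j)$ (dimension $\le5$) coming from the members of corank $>k$. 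As $C$ is irreducible it lies in one of these, hence in a proper linear subspace of $\PP^6$, contradicting the non‑degeneracy of the canonical curve $C$.

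The step I would be most careful about — and which I regard as the main obstacle — is the passage into the language of quadrics: establishing that $E^\vee_x$ is exactly the space of quadrics singular at $x$, and then the saturation/degree bookkeeping that upgrades the sub‑line‑bundle $A^\vee\hookrightarrow E^\vee$ to an honest surjective degree‑$5$ morphism $\gamma\colon C\to\PP(\Pi)$ with every member of $\Pi$ singular. Once that is secured, the rest is formal: the Kronecker normal form pins $\bigcup_{[Q]}\mathrm{Sing}(Q)$ inside a linear space of dimension $\le5$ (with the corank‑$1$ case disposed of separately by ramification), which is incompatible with $C$ spanning $\PP^6$.
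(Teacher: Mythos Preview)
Your argument is correct and takes a genuinely different route from the paper's. Both begin by dualizing the surjection $E\twoheadrightarrow A$, but then diverge.

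The paper's approach is much shorter: from the diagram built out of the self-dual sequence \eqref{spin2} and the base-point-free pencil trick for $A$, one extracts a nonzero element of $H^0(C,E\otimes A^\vee)$ (not merely of $H^0(C,E^\vee\otimes A)$, which is what you use). Setting $L:=K_C\otimes A^\vee\in W^2_7(C)$, this is reinterpreted as a nonzero element of $\ker\bigl(\mathrm{Sym}^2 H^0(C,L)\to H^0(C,L^{\otimes 2})\bigr)$, i.e.\ a conic through the plane model $\phi_L(C)\subset\PP^2$. Since $C$ is neither hyperelliptic nor trigonal, $\phi_L$ is birational onto a plane septic, which lies on no conic; contradiction. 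Your route, by contrast, stays entirely on the $E^\vee$ side: the sub-line-bundle $A^\vee\hookrightarrow E^\vee\hookrightarrow I_2(K_C)\otimes\OO_C$ yields a pencil $\Pi\subset I_2(K_C)$ with $C\subset\bigcup_{Q\in\Pi}\mathrm{Sing}(Q)$, and the Kronecker--Weierstrass form bounds this union inside a hyperplane. This is heavier machinery, but has the merit of using only the quadric geometry of the canonical embedding, without invoking the residual series $L$ or the self-duality of $E$ coming from the spinor bundle.

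Two small remarks on your write-up. First, the step you flag as delicate is in fact automatic: both $A^\vee\hookrightarrow E^\vee$ (dual of a bundle surjection) and $E^\vee\hookrightarrow I_2(K_C)\otimes\OO_C$ (kernel of a surjection onto a bundle) are sub\emph{bundle} inclusions, hence so is their composite; thus $A^\vee$ is already saturated in $I_2(K_C)\otimes\OO_C$ and $\gamma$ is immediately the degree-$5$ morphism given by $|A|$, with image spanning a line. Second, in the case $k\ge 2$ the join of $R_1,\dots,R_k$ is not itself a linear space, but it lies in the span $\bigoplus_i\langle R_i\rangle$, which is what your dimension count actually bounds; and the $k=1$ case can be absorbed into the same argument (the swept locus is then a rational curve, which cannot contain the genus-$7$ curve $C$), so the separate ramification argument, while correct, is not strictly needed.
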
\label{surj}
\begin{proof} We proceed by contradiction. Assume that there is such a pencil $A\in W^1_5(C)$, then use the base point free pencil trick to write the following diagram:
\begin{equation}
\xymatrix{
0\ar[r]& E^{\vee}\ar[r]\ar[d] &  H^0(C,E)\otimes \OO_C\ar[d]\ar[r]&E\ar[d]\ar[r]&0
\\
0\ar[r] & A^{\vee} \ar[r] &H^0(C,A)\otimes \OO_C \ar[r] & A\ar[d]\ar[r] &0
\\
 &  &  & 0&
}
\end{equation}
In particular, $H^0(C, E\otimes A^{\vee})\neq 0$. Via the identification $H^0(C,E)=I_2(K_C)$, this implies that if $L:=K_C\otimes A^{\vee}\in W^2_7(C)$, then the multiplication map
$$\mbox{Sym}^2 H^0(C,L)\rightarrow H^0(C, L^{\otimes 2})$$ is not injective. This is possible only if $L$ is not birationally very ample, in particular, $C$ must be trigonal, which is not the case.
\end{proof}

\vskip 3pt

We are now in a position to prove that the twist $E$ of the conormal bundle of a canonical curve of genus $7$ is  stable.

\vskip 5pt

\noindent \emph{Proof of Theorem \ref{g7}.} Suppose that $0\rightarrow F\rightarrow E\rightarrow M\rightarrow 0$ is a destabilizing sequence for the vector bundle $E$, that is, with
$\mu(F)\geq \mu(E)=\frac{24}{5}$. Since $E$ is globally generated, so is any of its quotient, in particular $M$ too. We distinguish several possibilities, depending on the ranks that appear:

\vskip 4pt

\noindent {\bf (i)} $\mbox{rk}(F)=4$ and $M$ is line bundle. Then $\mbox{deg}(F)\geq 20$, hence $\mbox{deg}(M)\leq 4$. Since $C$ is not tetragonal, $h^0(C,M)\leq 1$.  Note that $M\neq \OO_C$, for $H^0(C, E^{\vee})=0$. It follows that $M$ is not globally generated, a contradiction.

\vskip 3pt

\noindent {\bf (ii)} $\mbox{rk}(F)=1$ and we may assume that $\mbox{deg}(F)=5$. Suppose first that $h^0(C, F)=0$, therefore $h^0(C, K_C\otimes F^{\vee})=1$, and hence $K_C\otimes F^{\vee}$ is not globally generated. Since one has a surjection $E^{\vee}(1)\twoheadrightarrow K_C\otimes F^{\vee}$, we reach a contradiction by observing that $E^{\vee}(1)$ is globally generated. Indeed, via Serre duality, this last statement is equivalent to the equality $h^0(C, E(p))=h^0(C,E)=10$, for every point $p\in C$. From the exact sequence
$$0\longrightarrow E(p)\longrightarrow M_{K_C}\otimes K_C(p)\longrightarrow K_C^{\otimes 3}(p)\longrightarrow 0,$$
we obtain that $H^0(C, E(p))=\mbox{Ker}\Bigl\{H^0(C, M_{K_C}\otimes K_C(p))\rightarrow H^0(C, K_C^{\otimes 3}(p))\Bigr\}$.
The conclusion follows, since $H^0(C, M_{K_C}\otimes K_C)=H^0(C, M_{K_C}\otimes K_C(p))$.

\vskip 5pt

Suppose now that $h^0(C, F)\geq 1$. The case $h^0(C, F)\geq 2$ having been discarded in the course of proving Lemma \ref{surj}, we assume that $h^0(C, F)=1$, hence $h^0(C,K_C\otimes F^{\vee})=2$.
We obtain that the multiplication map 
$$\mbox{Sym}^2 H^0(C, K_C\otimes F^{\vee})\rightarrow H^0(C, K_C^{\otimes 2}\otimes F^{\otimes (-2)})$$ is not injective, which contradicts the base point free pencil trick.

\vskip 5pt

\noindent {\bf (iii)} $\mbox{rk}(F)=3$, and then $\mbox{deg}(F)\geq 15$, hence $\mbox{deg}(M)\leq 9$. This time we may assume that $F$ is stable. If $M$ is not stable, we choose a line subbundle $A\subset M$ of maximal degree, which we pull-back under the surjection $E\twoheadrightarrow M$, to obtain the exact sequence
$$0\longrightarrow G\longrightarrow E\longrightarrow M/A\longrightarrow 0.$$
We obtain that $\mbox{deg}(M/A)\leq \mbox{deg}(M)/2\leq 9/2$, that is, $\mbox{deg}(M/A)\leq 4$. In particular, $M/A$ is not globally generated, which is again a contradiction, so we can assume that both $F$ and $M$ are stable vector bundles. Since $h^0(C,M)+h^0(C, F)\geq h^0(C,E)=10$, the strategy is to use the fact that the  Mercat statements $(M_2)$ and $(M_3)$ have been established for curves $C$ of genus $7$ with maximal Clifford index, that is,
$$\mbox{Cliff}_2(C)=\mbox{Cliff}_3(C)=3,$$
see \cite{LN3} Theorem 4.5. In particular, if both $F$ and $M$ contribute to their respective Clifford indices, that is, $h^0(C,F)\geq 6$ and $h^0(C,M)\geq 4$ respectively, then we write
$$\frac{9}{2}+3\leq \frac{3}{2} \gamma(F)+\gamma(M)=\frac{1}{2}\Bigl(\mbox{deg}(F)+\mbox{deg}(M)\Bigr)-h^0(C, F)-h^0(C, M)+5,$$
that is, $h^0(C, F)+h^0(C,M)\leq \frac{19}{2}$, a contradiction.

\vskip 3pt

Assume now that one of the bundles $F$ or $M$ does not contribute to its Clifford index. Since $M$ is globally generated, $h^0(C,M)\geq 2$. We can have $h^0(C,M)=2$, only when $M=\OO_C^{\oplus 2}$, which is impossible, for $\OO_C^{\oplus 2}$ is not a direct summand of $E$. If $h^0(C,M)=3$, then $\mbox{deg}(M)\geq 7$, and one has equality if and only if $M=Q_L$, where $L\in W^2_7(C)$. Assuming this to be the case, we choose two points $p, q\in C$ that correspond to a node in the plane model $\phi_L:C\rightarrow \PP^2$, that is, $A:=L(-p-q)\in W^1_5(C)$. Then there is a surjection $Q_L\twoheadrightarrow A$, which by composition gives rise to a surjective morphism $E\twoheadrightarrow A$. This contradicts Lemma \ref{surj}. 

Thus we may assume that $\mbox{deg}(M)\geq 8$, and accordingly, $\mbox{deg}(F)\leq 16$. Then we compute
$$\gamma(F)=\mu(F)-\frac{2}{3} h^0(C,F)+2\leq \frac{16}{3}-\frac{14}{3}+2<\mbox{Cliff}(C),$$
which again contradicts the equality $\mbox{Cliff}_3(C)=3$.

\vskip 4pt

\noindent {\bf (iv)} $\mbox{rk}(F)=2$, and then $\mbox{deg}(F)\geq 10$ and $\mbox{deg}(M)\leq 14$. We may assume this time that $M$ is stable. If $F$ is not stable, then it has a line subbundle $A\hookrightarrow F$ with $\mbox{deg}(A)\geq 5$, and we are back to case (ii). Thus both $M$ and $F$ are stable bundles, and we proceed precisely like in case (iii).

\hfill $\Box$

\vskip 5pt

It is instructive to remark that the normal bundle of a canonical curve of genus $g<7$ is never stable. More generally we have the following:
\begin{prop}
The normal bundle of a tetragonal canonical curve of genus $g$ is unstable.
\end{prop}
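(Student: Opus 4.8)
The plan is to use the fact that a tetragonal canonical curve lies on a $3$-dimensional scroll and to read off a destabilizing quotient of the twisted conormal bundle $E=N_C^{\vee}\otimes K_C^{\otimes 2}$ from the conormal sequence of that scroll. Fix a pencil $A\in W^1_4(C)$. For $D\in|A|$, geometric Riemann--Roch gives $\dim\langle D\rangle=\deg D-1-\dim|A|=2$, so the planes $\langle D\rangle\cong\PP^2$ sweep out, as $D$ runs over $|A|\cong\PP^1$, a $3$-dimensional rational normal scroll $X\subset\PP^{g-1}$ of degree $g-3$ containing $C$ (Schreyer's picture of tetragonal canonical curves). Write $X=\PP(\mathcal F)$ with $\mathcal F$ of rank $3$ on $\PP^1$, hyperplane class $H$ and ruling class $R$, so that $H^3=g-3$, $H^2R=1$, $HR^2=R^2=0$ and $\omega_X=-3H+(g-5)R$; for a general tetragonal curve $X$ is smooth, and in general one passes to the desingularization $\PP(\mathcal F)\to X$ (see below).

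Since $X$ is smooth along $C$, twisting the conormal sequence of $C\subset X\subset\PP^{g-1}$ by $K_C^{\otimes 2}$ gives
\[
0\longrightarrow N_{X/\PP^{g-1}}^{\vee}|_C\otimes K_C^{\otimes 2}\longrightarrow E\longrightarrow \mathcal Q\longrightarrow 0,\qquad \mathcal Q:=N_{C/X}^{\vee}\otimes K_C^{\otimes 2},\quad \rk\mathcal Q=2.
\]
To compute $\deg\mathcal Q$ I observe that the cycle class of $C$ in $X$ is $[C]=4H^2+(10-2g)HR$: a general ruling meets $C$ in the $4$ points of a fibre of $A$, so $C\cdot R=4$, while $C\cdot H=\deg C=2g-2$ forces the coefficient of $HR$. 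Adjunction $\det N_{C/X}=\omega_C\otimes\omega_X^{\vee}|_C$ then yields $\deg\det N_{C/X}=(2g-2)-\omega_X\cdot C=4g+12$, hence $\deg\mathcal Q=-(4g+12)+2\deg K_C^{\otimes 2}=4g-20$ and $\mu(\mathcal Q)=2g-10$. Since $\det E=K_C^{\otimes(g-5)}$ and $\rk E=g-2$,
\[
\mu(E)-\mu(\mathcal Q)=\frac{(g-5)(2g-2)}{g-2}-(2g-10)=\frac{2(g-5)}{g-2}>0\qquad(g\ge 6),
\]
so $\mathcal Q$ is a quotient of $E$ of rank $2<g-2=\rk E$ with $\mu(\mathcal Q)<\mu(E)$; thus $E$ is not semistable, and since dualizing and tensoring with a line bundle preserve (semi)stability, $N_C=E^{\vee}\otimes K_C^{\otimes 2}$ is not semistable, hence unstable. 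The remaining cases $g\le 6$ (of which only $g=5,6$ are tetragonal) are classical, as recalled in the introduction.

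The one point that requires care is the reduction of an \emph{arbitrary} tetragonal curve to the smooth-scroll situation: when the scrollar invariants are unbalanced, $X$ is a cone, and one replaces $X$ by the smooth model $\PP(\mathcal F)$, lifting $C$ to its strict transform $\widetilde C$ under the small contraction $\PP(\mathcal F)\to X$. One must then check that the pullback of $H$ keeps the intersection numbers $H^3=g-3$, $H^2R=1$, that $\widetilde C$ still has class $4H^2+(10-2g)HR$, and that $\omega_X\cdot C$ — hence $\deg N_{\widetilde C/\PP(\mathcal F)}$ — is unchanged, so that the slope bound above persists; the conormal-sequence argument is then run with the birational-onto-$X$ morphism $\PP(\mathcal F)\to\PP^{g-1}$. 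Everything else — geometric Riemann--Roch, the existence of the scroll, the conormal sequence and adjunction — is routine.
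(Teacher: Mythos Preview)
Your argument is correct and is essentially the same as the paper's: both place $C$ on the $3$-dimensional rational normal scroll $X=\PP(\mathcal F)$ coming from the $4:1$ cover, use adjunction with $K_X\equiv -3H+(g-5)R$ to find $\deg N_{C/X}=4g+12$, and compare slopes via the normal bundle sequence of $C\subset X\subset\PP^{g-1}$. The only difference is cosmetic---you dualize and twist to work with $E=N_C^{\vee}\otimes K_C^{\otimes 2}$ and exhibit $\mathcal Q=N_{C/X}^{\vee}\otimes K_C^{\otimes 2}$ as a destabilizing quotient, whereas the paper works directly with $N_{C/\PP^{g-1}}$ and shows $N_{C/X}$ is a destabilizing subbundle; the slope gap $\frac{2(g-5)}{g-2}$ is the same either way, and the paper likewise disposes of $g\le 5$ by the complete-intersection remark.
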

\begin{proof} More generally, we begin with a $k:1$ covering $f:C\rightarrow \PP^1$, and consider the rank $(k-1)$-vector bundle $\F^{\vee}:=f_*\OO_C/\OO_{\PP^1}$ on the projective
line. Then $\pi:X=\PP(\F)\rightarrow \PP^1$ is a scroll of dimension $k-1$, which contains the canonical curve $C$ and which can be embedded by the tautological bundle
$\OO_{X}(1)$ in $\PP^{g-1}$ as a variety of degree $g-k+1$. Denoting by $H, R\in \mbox{Pic}(X)$ the class of the hyperplane section and that of the ruling respectively,
we have $$K_X\equiv -(k-1)H+(g-k-1)R,$$
whereas obviously $C\cdot H=2g-2$ and $C\cdot R=k$. We compute the degree of the normal bundle $N_{C/X}$ and find:
$$\mbox{deg}(N_{C/X})=\mbox{deg}(T_{X|C})+\mbox{deg}(K_C)=k(g+k-1).$$
We write the usual exact sequence relating normal bundles
$$0\longrightarrow N_{C/X}\longrightarrow N_{C/\PP^{g-1}}\longrightarrow N_{X/\PP^{g-1}}\otimes \OO_C \longrightarrow 0,$$
and compare the slopes
$$\mu(N_{C/X})=\frac{k(g+k-1)}{k-2}  \ \ \mbox{ and } \ \ \mu(N_{C/\PP^{g-1}})=\frac{2(g-1)(g+1)}{g-2}.$$
We conclude that for $k=4$ and $g\geq 6$, the normal bundle $N_{C/X}$ is a destabilizing subbundle of  $N_{C/\PP^{g-1}}$. For $g$ at most $5$, every canonical curve of genus $g$ is a complete intersection which obviously produces a destabilizing line subbundle.
\end{proof}

\end{document}